\documentclass[12pt]{amsart}
\usepackage{amssymb}
\usepackage{amsfonts}
 \usepackage{amsfonts,amssymb}

\usepackage{mathrsfs}
\let\mathcal\mathscr

\usepackage[all,ps,cmtip]{xy}

\def\llra{\hbox to 10mm{\rightarrowfill}}

\def\lllra{\hbox to 15mm{\rightarrowfill}}

\def\phi{{\varphi}}

\def\cI{\mathcal{I}}

\def\cF{\mathcal{F}}

\def\cO{\mathcal{O}}

\def\cP{\mathcal{P}}

\def\cJ{\mathcal{J}}

\let\tilde\widetilde

\DeclareMathOperator{\codim}{codim}
\DeclareMathOperator{\Pic}{Pic}

\newtheorem{lemm}{Lemma}[section]
\newtheorem{theo}[lemm]{Theorem}
\newtheorem{coro}[lemm]{Corollary}

\newtheorem{prop}[lemm]{Proposition}

\newtheorem*{conj*}{Conjecture}

\theoremstyle{definition}
\newtheorem{defn}[lemm]{Definition}

\newtheorem{rema}[lemm]{Remark}

\newtheorem{conj}[lemm]{Conjecture}

\newtheorem{qu}[lemm]{Question}

\theoremstyle{remark}
\newtheorem*{remark*}{Remark}
\newtheorem*{note*}{Note}

\begin{document}
\title[Cohomological rank functions and Syzygies]{Cohomological rank functions and Syzygies of abelian varieties}
\author{Zhi Jiang}
\address{Shanghai center for mathematical sciences, Xingjiangwan campus, Fudan University, Shanghai 200438, P. R. China}
\email{zhijiang@fudan.edu.cn}
 \thanks{The author is partially supported by NSFC grants No. 11871155 and No. 11731004.}
\maketitle
\section{Introduction}
 Given a polarized abelian variety $(A, L)$ of dimension $g$, it is well-known that $L^{2}$ is basepoint free and $L^3$ is very ample. However it is a quite subtle question when $L$ is basepoint free or very ample. We may
 assume that the polarization type of $L$ is $(\delta_1, \ldots, \delta_g)$ where $\delta_1|\delta_2|\cdots   |\delta_g$. We are interested in the projective geometry of $(A, L)$ when $\delta_1=1$ in this article. Such
 polarizations are called primitive polarizations.

 In \cite{DHS}, the authors studied primitive polarizations of type $(1, \ldots, 1, \delta_g)$ by degeneration method and proved that when  $(A, L)$ is generic in the corresponding moduli space, then
\begin{itemize}
 \item $L$ is basepoint free iff $d>g$;
 \item when $d>g+1$, the linear system $|L|$ induces a birational morphism from $A$ to its image;
 \item when $d>2^g$, $L$ is very ample.
 \end{itemize}

On the other hand, let's recall the following famous conjecture which generalizes Fujita's conjecture (see \cite[Conjecture 5.4]{Kol1}).
\begin{conj}\label{fujita} Let $X$ be a smooth projective variety, $x\in X$ a point, $L$ be a nef and big line bundle on $X$. Assume that $$(L^{\dim X})>(\dim X)^{\dim X},$$ and  for any positive dimensional irreducible
subvariety $x\in Z$, $$(L^{\dim Z}\cdot Z)\geq (\dim X)^{\dim Z},$$ then $K_X+L$ is basepoint free at $x$.
\end{conj}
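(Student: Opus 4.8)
The plan is to attack this via the multiplier-ideal method of Angehrn--Siu, as refined by Kawamata and Helmke. Write $n=\dim X$. The aim is to construct an effective $\Q$-divisor $D\equiv_{\Q}\lambda L$ with $\lambda<1$ such that the pair $(X,D)$ is klt on a punctured neighborhood of $x$ and has an isolated non-klt point at $x$, with multiplier ideal $\cJ(D)$ equal to the maximal ideal $\mathfrak{m}_x$ near $x$. Granting this, since $L-D\equiv_{\Q}(1-\lambda)L$ is still nef and big (so the nef-and-big, rather than ample, hypothesis on $L$ costs nothing), Nadel vanishing gives $H^1\big(X,\cO_X(K_X+L)\otimes\cJ(D)\big)=0$. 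The long exact sequence then yields a surjection $H^0(X,K_X+L)\tto (\cO_X/\cJ(D))\otimes(K_X+L)$, whose target is the nonzero fibre at $x$; hence $K_X+L$ admits a section nonvanishing at $x$, that is, it is basepoint free at $x$.

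First I would create high multiplicity at $x$. For $m\gg 0$, asymptotic Riemann--Roch gives $h^0(X,mL)\sim (L^n)m^n/n!$, while vanishing to order $k$ at $x$ imposes $\sim k^n/n!$ conditions. Taking $k$ slightly above $nm$, the hypothesis $(L^n)>n^n$ guarantees $h^0\big(X,mL\otimes\mathfrak{m}_x^{k}\big)>0$, producing $D_0\in|mL|$ with $\mathrm{mult}_x D_0>nm$. Then $\tfrac{1}{m}D_0\equiv_{\Q}L$ has multiplicity $>n$ at $x$, so its log canonical threshold at $x$ is $<1$; after scaling we obtain a pair $(X,cL)$ with $c<1$ that fails to be klt at $x$. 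This is the seed singularity.

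Next I would cut the non-klt locus down to $x$. Let $W$ be the minimal non-klt centre through $x$. If $\dim W=d>0$, the hypothesis $(L^{d}\cdot W)\geq n^{d}$ furnishes sections of $L|_W$ vanishing to high order along $W$ at $x$; pulling these back and adding them to $D$ (after a tie-breaking step to make the centre unique) strictly lowers $\dim W$ while the coefficient stays controlled. Iterating along the descending flag of centres, using the restriction and subadditivity theorems for multiplier ideals together with Kawamata's subadjunction to govern the induced pair on each centre, one drives the centre down to the isolated point $x$.

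The hard part is the accounting of coefficients. Each cutting step spends a portion of the budget $\lambda<1$, and the whole scheme succeeds only if the accumulated coefficient stays strictly below $1$ when the centre finally collapses to a point. This is exactly where the sharp bounds $n^{\dim Z}$ must be spent with no waste, and where present technology falls short: Angehrn--Siu, Helmke, and Heier can manufacture an isolated log canonical singularity only from super-linear volume bounds (of order roughly $n^2$ rather than the conjectural $n$), so the induction does not close under the stated hypotheses alone. I therefore expect the genuine obstruction to be this final quantitative step---producing an isolated non-klt singularity of coefficient $<1$ out of volume bounds that are only $n^{\dim Z}$---which is precisely the reason this conjecture remains open in general.
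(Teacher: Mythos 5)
You were asked to prove what is in fact an open problem: the statement is Conjecture \ref{fujita} of the paper, quoted from Koll\'ar as a generalization of Fujita's conjecture, and the paper offers no proof of it --- it serves purely as motivation for the weaker, abelian-variety-specific theorems the paper actually establishes. Your proposal handles this situation correctly. You lay out the standard Angehrn--Siu/Kawamata/Helmke attack (seed a non-klt point at $x$ from $(L^{\dim X})>(\dim X)^{\dim X}$, cut the minimal lc centre down by induction using the intersection-number hypotheses together with tie-breaking and subadjunction, finish with Nadel vanishing), and then, rather than pretending the induction closes, you identify exactly where it breaks: the linear bounds $(L^{\dim Z}\cdot Z)\geq (\dim X)^{\dim Z}$ are not known to suffice for the cutting step, so no argument of this shape currently terminates with coefficient below $1$.

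This diagnosis is consistent with the machinery the paper itself builds. The version of Helmke's induction used there (Proposition \ref{ind2}) requires $(D^{d}\cdot Z_1)\geq g^{d}\binom{g-1}{d-1}$, with the binomial surplus over the conjectural $g^{d}$; that surplus is precisely why the paper's unconditional result (Theorem \ref{result}) needs bounds of the form $(2(p+2)\dim B)^{\dim B}$, and why the sharp linear-type bounds are reached only for Hodge-theoretically very general abelian varieties (Theorem \ref{verygeneral}), where Lemma \ref{Hodgenumber1} supplies intersection estimates strong enough to feed Helmke's hypothesis. Note also that the paper exploits a luxury unavailable on a general smooth projective $X$: on an abelian variety one may translate the divisor so that the lc centre passes through the chosen point (as in Lemma \ref{ind1}), whereas your sketch on general $X$ must keep the centre pinned at $x$ throughout. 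In short, your proposal is not a proof, but none exists; its account of both the strategy and the genuine obstruction is accurate.
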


Since $L$ is big and nef,  it is easy to see by vanishing theorem that $K_X+L$ is basepoint free at $x\in X$ is equivalent to the fact that  $H^1(X, \cI_x\otimes \cO_X(K_X+L))$.
Hence the basic strategy to attack this conjecture is the following:  construct a singular $\mathbb Q$-divisor $D$ numerically equivalent to $cL$ for some rational number $0<c<1$ such that $x$ is an isolated component of
the non-klt locus $\mathrm{Nklt}(X, D)$ of $(X, D)$. Recall that $\mathrm{Nklt}(X, D)$ is simply  the subscheme defined by the multiplier ideal $\cJ(X, D)$. Then one can conclude by Nadel vanishing.

There are  other positive properties we like to know about  ample divisors.  Let $L$ be an ample line bundle on a projective variety $X$. We say that $L$ satisfies property $(N_p)$ if the first $p$ steps of the minimal
graded free resolution of the section algebra $R_L:=\bigoplus_mH^0(X, L^m)$ over the polynomial ring $S_L=\mathrm{Sym}^{\cdot} H^0(X,L)$ are linear (see for instance \cite{Lar1}). For instance, $(N_0)$ means that $L$ is
projectively normal and $(N_1)$ is equivalent to say that the homogeneous ideal of $X$ in $\mathbb P(H^0(X, L)^*)$ is generated by quadrics. We will also regard basepoint-freeness as condition $(N_{-1})$.

When $(A, L)$ is a polarized abelian variety,  Lazarsfeld, Pareschi, and Popa \cite{LPP}  provided a very nice criterion to tell when $L$ satisfies property $(N_p)$. This method is similar to the general approach  for
Fujita's conjecture. More explicitly, let $o$ be the origin of $A$ and define \begin{eqnarray*}r(L):=\min\{c\mid there\; exists\; a\; \mathbb Q-\mathrm{effective}\;divisor\; D\equiv cL\\ \; such\; that\;
\cJ(D)=\cI_o\}\end{eqnarray*}
Then the main result of \cite{LPP} essentially states that if $r(L)<\frac{1}{p+2}$, then $L$ satisfies property $(N_p)$.

Ito \cite{Ito1} and Lozovanu  \cite{Loz} asked the following  conjecture on abelian varieties.
  \begin{conj}\label{main-problem}
Let $(A, L)$ be a polarized abelian variety of dimension $g$. For $p$ a positive integer, denote by $D=\frac{1}{p+2}L$ a $\mathbb Q$-ample divisor.  Assume that   $$(D^{\dim B}\cdot B)>(\dim B)^{\dim B}$$ for all abelian
subvariety of $B$ of $A$. Then $L$ satisfies property $(N_p)$.
\end{conj}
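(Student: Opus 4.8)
The plan is to reduce property $(N_p)$ to the construction of a single singular $\mathbb{Q}$-divisor by means of the criterion of \cite{LPP} recalled above: it suffices to exhibit a $\mathbb{Q}$-effective divisor $D'\equiv cL$ with $c<\frac{1}{p+2}$ and $\cJ(D')=\cI_o$, i.e. to prove $r(L)<\frac{1}{p+2}$. Writing $c_0=\frac{1}{p+2}$, the hypotheses $(c_0L)^{\dim B}\cdot B>(\dim B)^{\dim B}$ are \emph{strict}, so by continuity of intersection numbers there is some $c<c_0$ for which $(cL)^{\dim B}\cdot B>(\dim B)^{\dim B}$ still holds for every abelian subvariety $B\subseteq A$. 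Everything then reduces to the following Fujita-type assertion: if $(cL)^{\dim B}\cdot B>(\dim B)^{\dim B}$ for all abelian subvarieties $B$, then the origin $o$ is an isolated, reduced point of $\mathrm{Nklt}(A,D')$ for some $\mathbb{Q}$-effective $D'\equiv cL$.

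To produce a first singular divisor I would exploit bigness at $o$. Since $(cL)^g=(cL)^{\dim A}\cdot A>g^g$ and the higher cohomology of powers of the ample bundle $L$ vanishes, an asymptotic Riemann--Roch count gives, for $m\gg0$, a section of $L^{m}$ vanishing at $o$ to order roughly $(L^g)^{1/g}m$; rescaling to $\frac{c}{m}$ times its divisor yields a $\mathbb{Q}$-effective $D_0\equiv cL$ with $\mathrm{mult}_o D_0>g$, so that $o\in\mathrm{Nklt}(A,D_0)$. The remaining task is to shrink the non-klt locus down to the single reduced point $o$.

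This I would carry out by the Angehrn--Siu/Kollár machinery adapted to the homogeneous geometry of $A$. Among all $\mathbb{Q}$-effective $D'\equiv cL$ making $o$ non-klt, select one minimizing first the log canonical threshold and then the dimension of the minimal non-klt center $Z\ni o$, with standard tie-breaking making $Z$ the unique such center. If $\dim Z=0$ a final perturbation forces $\cJ(D')=\cI_o$ and we are done. If $\dim Z>0$, the crucial abelian-variety input is that, after averaging over translates and using the $A$-action on itself, the minimal center can be arranged to be stable under a positive-dimensional abelian subvariety; uniqueness of minimal centers then forces $Z$ to be a translate through $o$ of a proper abelian subvariety $B\subsetneq A$. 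One restricts $D'$ to $B$, where subadjunction (Kawamata) transports the non-klt condition and where $(cL|_B)^{\dim B}=(cL)^{\dim B}\cdot B>(\dim B)^{\dim B}$ permits the whole construction to be reapplied; descending induction on $\dim Z$ lowers the center to dimension $0$.

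The main obstacle is precisely this inductive cutting-down. One must (i) prove that every positive-dimensional minimal center obstructing the argument is a translate of an abelian subvariety---this is exactly where the hypothesis on \emph{all} abelian subvarieties, and not merely on $A$, is indispensable---and (ii) control the bookkeeping of coefficients so that, after restricting to $B$, building a new center inside $B$, and gluing back on $A$, the resulting divisor is still numerically $\equiv cL$ with $c<c_0$. The delicate point is (ii): the positivity spent to restart the procedure on $B$ must not be double-counted, and keeping the total coefficient strictly below $\frac{1}{p+2}$ is the difficulty that leaves the full conjecture open. A reformulation in terms of cohomological rank functions---bounding the basepoint-freeness threshold $\beta(L)$ directly, using that these functions are piecewise polynomial with leading term governed by $(L^g)$ and with jumps controlled by abelian subvarieties---offers an alternative route that may sidestep the coefficient accounting entirely, and is the approach I would expect to be most fruitful.
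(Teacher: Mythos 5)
The statement you set out to prove is Conjecture \ref{main-problem}, which the paper itself does \emph{not} prove and which remains open: the paper establishes only weakened forms of it, namely Theorem \ref{result}, where the hypothesis is strengthened to $(L^{\dim B}\cdot B)>(2(p+2)\dim B)^{\dim B}$ (an extra factor $2^{\dim B}$), and Theorem \ref{verygeneral}, where $(A,L)$ is assumed Hodge theoretically very general. Your write-up in fact concedes this, since you say the coefficient bookkeeping is ``the difficulty that leaves the full conjecture open''; so what you have is a strategy sketch, not a proof, and it contains a step that is genuinely wrong rather than merely incomplete.

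The false step is your claim (i): that ``after averaging over translates and using the $A$-action'' the minimal lc center through $o$ can be arranged to be a translate of a proper abelian subvariety. Multiplier ideals and lc centers cannot be averaged, and a minimal lc center of a divisor $D'\equiv cL$ on an abelian variety can be an essentially arbitrary normal subvariety --- in particular one whose desingularization is of general type, such as a curve of genus at least $2$ generating $A$. This is precisely the hard case, and it is where all the work in the paper goes: Theorem \ref{main} treats general-type centers $Z$ not by any birational cutting-down inside $Z$, but by generic vanishing theory --- subadjunction (Theorem \ref{subadjunction}), the GV property of $\mu_*(\cO_{\tilde Z}(K_{\tilde Z}))\otimes\cI_{o,Z}$ (Lemma \ref{ideal-GV}), M-regularity of the residual sheaf $\mu_{M*}\cO_{\tilde D}(K_{\tilde D})$, and the fact that GV sheaves have vanishing $h^1$-rank functions for $t>0$ --- and that argument consumes \emph{twice} the divisor $D_1$, which is exactly the origin of the factor $2$ in Theorem \ref{result} and the reason the method falls short of the conjecture. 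The alternative of continuing the Helmke induction (Proposition \ref{ind2}) on an arbitrary center $Z_1$ requires the lower bound $(D^{d}\cdot Z_1)\geq g^{d}\binom{g-1}{d-1}$ for that center; bounding intersection numbers of $D$ with arbitrary nondegenerate subvarieties from below is the open Question \ref{intersectionquestion}, which the paper can verify only when $\dim_{\mathbb Q}H^{k,k}(A,\mathbb Q)=1$ (Lemma \ref{Hodgenumber1}), and this is exactly why the conjecture is proved only for very general $(A,L)$ in Theorem \ref{verygeneral}. Note finally that the hypothesis on all abelian subvarieties $B$ is used in the paper not to show that centers are abelian, but to run the induction on the quotient $A/K$ and on the fiber $K$ when the center happens to be fibred by an abelian subvariety $K$ (via Poincar\'e reducibility); it gives no control over general-type centers, which is where your proposed argument breaks down.
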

Note that the condition in Conjecute \ref{main-problem} is much weaker than Conjecture \ref{fujita} restricted on abelian varieties: we just need to bound the intersection numbers with abelian subvarieties and the bounds
are also much smaller.

 Caucci  applied cohomological rank functions defined in \cite{JP} to study the higher syzygies of ample line bundles on abelian varieties (\cite{C}). He considered the basepoint freeness threshold of $L$ defined in
 \cite{JP}: $$\beta(L):=\mathrm{min}\{t\in \mathbb Q\mid h^1_{\cI_o, L}(t)=0\},$$ where $h^1_{\cI_o, L}(t)$ is the cohomological rank function of the ideal sheaf of the origin $\cI_o$ associated to $L$. Then Caucci proved
 that if $\beta(L)<\frac{1}{p+2}$, then $L$ satisfies property $(N_p)$.
  He then asked a variant of Conjecture \ref{main-problem}.
  \begin{conj}\label{main-problem1}
Let $(A, L)$ be a polarized abelian variety of dimension $g$. For $p$ a positive integer, denote by $D=\frac{1}{p+2}L$ a $\mathbb Q$-ample divisor.  Assume that   $$(D^{\dim B}\cdot B)>(\dim B)^{\dim B}$$ for all abelian
subvariety of $B$ of $A$. Then $\beta(L)<\frac{1}{p+2}$.
\end{conj}

This conjecture was confirmed by Ito in dimension $\leq 3$ (see \cite{Ito}).
Define  $r'(L)$ to be  \begin{eqnarray*} \min \{c\mid there\; exists\; a\; \mathbb Q-\mathrm{effective}\; divisor\; D\equiv cL\; \\ such\; that\; o\; is \; an\; isolated \;
  component \; of\; \mathrm{Nklt}(A, D)\}.\end{eqnarray*}
It is clear that $r'(L)<r(L)$.
Ito's important observation is that $\beta(L)\leq r'(L)\leq r(L)$ (see \cite[Proposition 1.10]{Ito}).

  \begin{conj}\label{main-problem1}
Let $(A, L)$ be a polarized abelian variety of dimension $g$. For $p$ a positive integer, denote by $D=\frac{1}{p+2}L$ a $\mathbb Q$-ample divisor.  Assume that   $$(D^{\dim B}\cdot B)>(\dim B)^{\dim B}$$ for all abelian
subvariety of $B$ of $A$. Then $\beta(L)<\frac{1}{p+2}$.
\end{conj}

 The main results of this article are the following.
 \begin{theo}\label{result}Assume that $(A, L)$ is a polarized abelian variety of dimension $g$. Let $p\geq -1$ be an integer. Assume that $$(L^{\dim B}\cdot B)>(2(p+2)(\dim B))^{\dim B}$$ for any positive-dimensional
 abelian subvariety $B$ of $A$. Then
 $\beta(L)<\frac{1}{p+2}$. In particular, $L$ satisfies property $(N_p)$.
\end{theo}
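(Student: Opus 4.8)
The plan is to bound $\beta(L)$ through the chain $\beta(L)\le r'(L)$ supplied by Ito, so that it suffices to produce an effective $\Q$-divisor $D\equiv c'L$ with $c'<\frac{1}{p+2}$ for which the origin $o$ is an isolated point of $\mathrm{Nklt}(A,D)$; property $(N_p)$ then follows from Caucci's criterion. Throughout I write $c=\frac{1}{p+2}$, and I record that the hypothesis $(L^{\dim B}\cdot B)>(2(p+2)\dim B)^{\dim B}$ is exactly the assertion that $((cL)^{\dim B}\cdot B)>(2\dim B)^{\dim B}$ for every positive-dimensional abelian subvariety $B\subseteq A$. The extra factor $2^{\dim B}$ over the conjectural bound $(\dim B)^{\dim B}$ is the room I will spend on the inductive construction below.

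First I would create a single singularity at $o$. Since $L$ is ample, Riemann--Roch gives $h^0(A,mL)=m^g(L^g)/g!$, while imposing vanishing of order $s$ at $o$ costs $\binom{s+g-1}{g}$ conditions. Hence for any $t<(L^g)^{1/g}$ and $m\gg 0$ there is a nonzero section of $mL$ vanishing to order $\ge\lfloor mt\rfloor$ at $o$; rescaling its divisor produces an effective $\Q$-divisor $D_0\equiv c_0L$ with $\mathrm{mult}_o D_0\ge g$, hence with $o\in\mathrm{Nklt}(A,D_0)$, and with $c_0$ as close as we wish to $g/(L^g)^{1/g}$. The hypothesis applied to $B=A$ gives $(L^g)^{1/g}>2(p+2)g=2g/c$, so $c_0$ can be taken $<c/2$, leaving more than half of the budget $c$ for the isolation step.

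The heart of the proof is then an Angehrn--Siu type cutting-down procedure adapted to the abelian variety $A$. Let $Z\ni o$ be a minimal non-klt center of the current pair. If $\dim Z=0$ we are essentially done, since after a general choice $o$ is an isolated component of the non-klt locus and the accumulated coefficient is still $<c$. If $\dim Z=d>0$, let $B$ be the abelian subvariety generated by $Z$ (the smallest subtorus containing $Z$, using $o\in Z$). The hypothesis is imposed on \emph{all} positive-dimensional abelian subvarieties precisely so that, whatever $B$ arises, the positivity of $L|_B$ encoded in $(L^{\dim B}\cdot B)>(2(p+2)\dim B)^{\dim B}$ is available. Using Riemann--Roch on $B$ as above, I would construct an auxiliary effective $\Q$-divisor $D_1\equiv c_1L$ that is highly singular along $Z$, and then tie-break: after perturbing to $(1-\epsilon)D+D_1$ and removing common components, the new minimal non-klt center through $o$ has dimension strictly smaller than $d$. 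Iterating at most $g-1$ times drives the center down to $\{o\}$.

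The main obstacle is the geometry and the bookkeeping of this induction. Geometrically, the delicate case is $\dim Z<\dim B$, where $Z$ is not itself an abelian subvariety: one must then bound the singularities creatable along $Z$ by the positivity of $L|_B$ on the subtorus it generates rather than by $(L^{\dim Z}\cdot Z)$ directly, and it is the translation-invariance of the construction together with a genericity argument (to exclude spurious non-klt components away from $o$) that keeps this controllable. Numerically, one must ensure that the total coefficient $c_0+c_1+\cdots$ accumulated over the $\le g$ steps stays strictly below $c$; this is exactly where the factor $2$ in $(2(p+2)\dim B)^{\dim B}$ is consumed, the successive contributions forming a series that the doubled bound keeps under the budget $c=\frac{1}{p+2}$. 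Once $o$ is an isolated point of the non-klt locus with total coefficient $c'<c$, Ito's inequality gives $\beta(L)\le r'(L)\le c'<\frac{1}{p+2}$, and Caucci's theorem yields property $(N_p)$.
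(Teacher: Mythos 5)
Your opening move (using $B=A$ to create an initial non-klt pair at $o$ with coefficient well under $\frac{1}{p+2}$) and your closing appeals to Ito's inequality $\beta(L)\le r'(L)$ and Caucci's criterion are all legitimate, but the core of your argument has a genuine gap, and it is exactly the one you flag yourself in the penultimate paragraph without resolving. The iterated Angehrn--Siu/Helmke cutting-down requires, at each stage, a degree bound on the current minimal lc center $Z$ \emph{itself}: to replace $Z\ni o$ of dimension $d$ by a strictly smaller center at small cost in the coefficient, one needs enough sections vanishing appropriately along $Z$, and their number is governed by $(D^{d}\cdot Z)$ --- this is precisely the hypothesis $(D^{d}\cdot Z_1)\geq g^{d}\binom{g-1}{d-1}$ in Proposition \ref{ind2}. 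The theorem's assumption bounds intersection numbers only against abelian subvarieties $B$, and when $Z$ is not itself an abelian subvariety no lower bound on $(D^{d}\cdot Z)$ follows from the positivity of $L|_B$ on the subtorus $B$ generated by $Z$; no mechanism is known for ``bounding the singularities creatable along $Z$ by the positivity of $L|_B$.'' Whether a nondegenerate $Z$ must satisfy $(L^{d}\cdot Z)\geq (L^g)^{d/g}$ is precisely the paper's open Question \ref{intersectionquestion}; it is known only when $(A,L)$ is Hodge-theoretically very general (Lemma \ref{Hodgenumber1}), and that is exactly why the iterated-cutting strategy proves Theorem \ref{verygeneral} but cannot prove Theorem \ref{result}. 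Your appeal to translation-invariance and genericity controls where stray non-klt components lie; it does not supply the missing degree estimate.

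The paper's actual proof avoids the iteration entirely, and consequently the factor $2$ is spent in a different place than where you spend it. After a single application of Lemma \ref{ind1} one has $\cJ(D_1)=\cI_Z$ with $D_1\equiv\frac{c_1}{2(p+2)}L$, $c_1<1$, and by Lemma \ref{criterion} it then suffices to prove the cohomological-rank vanishing $h^1_{\cI_{o,Z},\,\frac{1}{p+2}L}(1-\eta)=0$ for small $\eta$; the origin is never isolated in a non-klt locus, so no bound on $r'(L)$ is ever established (nor could it be by this route). That vanishing comes from generic vanishing theory: by subadjunction $D_1|_Z\sim_{\mathbb Q}K_Z+D_Z$, so \emph{doubling} gives $2D_1|_Z-K_Z\sim_{\mathbb Q}K_Z+2D_Z$, an adjoint-type class (canonical plus big after adding $\epsilon L$) on $Z$ --- this is what the factor $2$ buys, not headroom in a numerical series. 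One then shows sheaves such as $\mu_*\omega_{\tilde Z}\otimes\cI_{o,Z}$ are GV or M-regular (Lemma \ref{ideal-GV}), runs the Fourier--Mukai argument along the isogenies $\pi_M$, and treats the case where $Z$ is fibred by a subtorus via Poincar\'e reducibility and induction on $\dim A$ --- which is where the hypothesis for \emph{all} abelian subvarieties $B$ is actually consumed, rather than in a per-center degree bound as in your sketch. In short, your proposal is not the paper's proof with details omitted; it is the natural strategy for Conjecture \ref{main-problem1}, and as stated it founders on the degree estimate for intermediate-dimensional lc centers.
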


For generic polarized abelian varieties with special polarizations, we can prove Conjecture \ref{main-problem1}.
\begin{theo} Let $(A, L)$ be a very general polarized abelian variety with polarization type $(1,\ldots, 1, \delta_g)$. For some integer $p\geq -1$, assume that $(L^g)>((p+2)g)^g$, then $\beta(L)<\frac{1}{p+2}$ and hence
$L$ satisfies property $(N_p)$.
\end{theo}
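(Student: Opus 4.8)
The plan is to bound the basepoint-freeness threshold via Ito's inequality $\beta(L)\le r'(L)$ together with Caucci's criterion: once we establish $r'(L)<\frac{1}{p+2}$, it follows that $\beta(L)<\frac{1}{p+2}$ and hence that $L$ satisfies $(N_p)$. The decisive structural input is that a very general $(A,L)$ of type $(1,\ldots,1,\delta_g)$ is simple, so its only positive-dimensional abelian subvariety is $A$ itself. Consequently the hypothesis $(L^g)>((p+2)g)^g$ is exactly the single inequality of Theorem~\ref{result} for $B=A$, but with the sharper constant $(p+2)g$ in place of $2(p+2)g$. The whole point is therefore to recover this factor-$2$ improvement, and simplicity is precisely what should make it available.

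First I would construct the singular divisor by a Riemann--Roch count at the origin. On $A$ one has $h^0(A,L^{\otimes k})=\chi(L^{\otimes k})=\frac{k^g(L^g)}{g!}$, whereas vanishing to order $m$ at $o$ imposes $\binom{m+g-1}{g}<\frac{(m+g-1)^g}{g!}$ linear conditions. Since $(L^g)^{1/g}>(p+2)g$, I may fix a rational $c$ with $\frac{g}{(L^g)^{1/g}}<c<\frac{1}{p+2}$; then for $k$ large there is a nonzero $s\in H^0(A,L^{\otimes k})$ with $\mathrm{mult}_o(s)\ge m$ for some integer $m>\frac{g}{c}k$. Setting $D=\frac{c}{k}\,\mathrm{div}(s)$ yields an effective $\mathbb{Q}$-divisor with $D\equiv cL$ and $\mathrm{mult}_o(D)=\frac{c}{k}\,\mathrm{mult}_o(s)>g$, so that $(A,D)$ fails to be klt at $o$. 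This step uses the numerical hypothesis in the sharpest possible way, and it is the source of the constant $(p+2)g$.

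At this stage $o\in\mathrm{Nklt}(A,D)$, and the main obstacle is to make $o$ an \emph{isolated} point of the non-klt locus without pushing the coefficient past $\frac{1}{p+2}$. Here I would exploit simplicity. If $o$ were not isolated, let $Z$ be the minimal non-klt center through $o$ with $0<\dim Z<g$; as $A$ is simple, $Z$ lies in no translate of a proper abelian subvariety, i.e. $Z$ is geometrically nondegenerate and its translates sweep out $A$. I expect the factor $2$ in Theorem~\ref{result} to be traceable to the inductive cutting-down of exactly such intermediate centers, whose cost must there be summed over all abelian subvarieties; for simple $A$ no proper abelian subvariety can carry these centers, so either a generic choice of $s$ already forces $Z=\{o\}$, or the nondegeneracy of $Z$ supplies the volume bound needed to excise $Z$ at negligible extra cost. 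The delicate point—where the argument must improve on the naive summation of the general case—is to perform this excision uniformly within the room $\frac{1}{p+2}-c>0$ left by the strict inequality. Granting the isolation, we obtain $r'(L)\le c<\frac{1}{p+2}$, hence $\beta(L)\le r'(L)<\frac{1}{p+2}$, and finally $L$ satisfies $(N_p)$ by Caucci's theorem.
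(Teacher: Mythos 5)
Your overall frame (produce a highly singular $\mathbb{Q}$-divisor at $o$, shrink its non-klt locus to make $o$ isolated, then invoke Ito's $\beta(L)\le r'(L)$ and Caucci's criterion) is the same as the paper's, and your first step — the Riemann--Roch multiplicity count giving $D\equiv cL$ with $c<\frac{1}{p+2}$ and $\mathrm{mult}_o(D)>g$ — is fine. The genuine gap is the excision step, which you do not carry out: everything after ``Granting the isolation'' is exactly the content that needs proof. More importantly, the tool you propose for it is insufficient. Simplicity of $A$ tells you only that an intermediate minimal lc center $Z$ with $0<\dim Z<g$ is geometrically nondegenerate; but the volume estimate you would need for such a $Z$, namely $(L^{\dim Z}\cdot Z)\geq (L^g)^{\dim Z/g}$, is precisely Question \ref{intersectionquestion} of the paper, which is posed there as an \emph{open} question. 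So ``the nondegeneracy of $Z$ supplies the volume bound'' is not something you can assert; nor is there any reason a generic choice of the section $s$ forces $Z=\{o\}$.

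What the paper actually uses is strictly stronger than simplicity: by Mattuck's theorem, a very general $(A,L)$ satisfies $\dim_{\mathbb Q}H^{k,k}(A,\mathbb Q)=1$ for all $k$, so the class of \emph{any} irreducible subvariety $Z$ of dimension $k$ is an integral multiple of the minimal class $\frac{1}{(g-k)!\,\delta_1\cdots\delta_{g-k}}[L^{g-k}]$. Combined with the numerical hypothesis $(L^g)>((p+2)g)^g$ and the arithmetic condition $(p+2)g\geq (g-i)\delta_i$ — which is automatic for type $(1,\ldots,1,\delta_g)$ and is the reason the theorem is stated for this polarization type, a point your argument never touches — this yields $(D^k\cdot Z_1)\geq g^k\binom{g-1}{k-1}$ for the lc center $Z_1$, which is exactly the hypothesis of Helmke's induction (Proposition \ref{ind2}). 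Iterating Helmke's step strictly decreases the dimension of the minimal lc center while keeping the total coefficient below $1$, until the center is $\{o\}$; this gives $r'(L)<\frac{1}{p+2}$ and the conclusion via Ito and Caucci (Theorem \ref{verygeneral} and its corollary). So the factor-$2$ improvement over Theorem \ref{result} comes from this Hodge-theoretic class computation, not from the absence of proper abelian subvarieties; to repair your proof you must replace the appeal to simplicity by the hypothesis $\dim_{\mathbb Q}H^{k,k}(A,\mathbb Q)=1$ and run Helmke's induction with the resulting intersection bound.
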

Here the condition that $(A, L)$ is very general is explicit. We just require the space of Hodge classes is of dimension $1$ in each degree.
See Theorem \ref{verygeneral} for the more precise version of this  result.
  \begin{rema}
  We recall some known results in this direction.

For $p=-1$ i.e. basepoint-freeness, the condition $(L^g)>(2g)^g$, which implies that $h^0(A, L)>2^g\frac{g^{g-1}}{(g-1)!}>2^gg$, is  much stronger than the condition in  Proposition 2 of \cite{DHS}. However the proof in
\cite{DHS} says nothing about the locus in the moduli space of $(A, L)$ when $|L| $ is basepoint free.

For $p=0$ i.e. projective normality, Iyer proved that if $(L^g)>2^g(g!)^2$ and $A$ is simple, then $L$ satisfies property $(N_0)$. The bound in Theorem \ref{result} is already better than Iyer's bound when $A$ is simple and
$g\geq 10$.

Lazarsfeld, Pareschi, and Popa proved that if $(L^g)>\frac{1}{2}(4(p+2)g)^g$ and $(A, L)$ is very general, then $L$ satisfies property $(N_p)$ for any $p\geq -1$ (\cite[Corollary B]{LPP}).

Ito (\cite{Ito1} and \cite{Ito}) proved Conjecture \ref{main-problem1} for abelian surfaces and abelian threefolds. He also proved that (\cite[Proposition 3.3]{Ito}) if $(L^{\dim B}\cdot B)>(g(p+2)(\dim B))^{\dim B}$ for
any positive-dimensional abelian subvariety $B$ of $A$, then
 $\beta(L)<\frac{1}{p+2}$.
 \end{rema}

\subsection*{Acknowledgements}
The author thanks Giuseppe Pareschi for many exchanges in this topic and thanks Olivier Debarre for pointing out the reference \cite{Iy}. The author is grateful to Atsushi Ito for pointing out several mistakes in a previous
version of this paper and many helpful comments.

  \section{General restults}

 \subsection{Kawamata and Helmke's work}
Let $A$ be an abelian variety and $D$ a $\mathbb Q$-ample divisor on $A$. Assume that there exists an effective $\mathbb Q$-divisor $D'\equiv cD$  with $0<c<1$ a rational number such that $(A, D')$ is not klt. Then we have
the following
\begin{lemm}  \label{ind1}
There exists $c_1 $ such that $0<c_1-c<<1$ and an effective $\mathbb Q$-divisor $D_1'\equiv c_1D$ such that $(A, D_1')$ is a log canonical pair satisfying:
\begin{itemize}
\item[1)] $\mathrm{Nklt}(A, D_1')$ is a normal subvariety $Z_1$ of $A$ containing the origin $o\in A$;
\item[2)] $Z_1$ is the minimal lc center through $o$ and is smooth at $o$.
\end{itemize}
\end{lemm}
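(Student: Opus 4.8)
The plan is to run the standard tie-breaking procedure of Kawamata and Helmke, exploiting the homogeneity of $A$. First I would use the fact that on an abelian variety every translation $t_a\colon A\to A$ satisfies $t_a^*D\equiv D$; replacing $D'$ by a suitable translate, I may therefore assume that the origin $o$ lies in $\mathrm{Nklt}(A,D')$. Next I reduce to the log canonical case: let $\lambda\in(0,1]$ be the log canonical threshold of $D'$, so that $(A,\lambda D')$ is log canonical but not klt and $o$ lies on one of its log canonical centers. Note that $\lambda D'\equiv\lambda cD$ with $\lambda c\le c$; the deficit $c-\lambda c$ will be recovered, together with a small surplus, in the tie-breaking step, which is exactly what produces the inequality $c_1>c$.

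The heart of the argument is to make the log canonical center through $o$ unique and to remove every other component of the non-klt locus. Let $Z_1$ be a log canonical center of $(A,\lambda D')$ of minimal dimension passing through $o$. Since $D$ is ample, $|mD|$ is very ample for $m\gg0$ and carries sections vanishing to prescribed order along prescribed subvarieties; using these I would construct an effective $\mathbb Q$-divisor $H\equiv\big((1-\lambda)c+\epsilon\big)D$, for any fixed $0<\epsilon\ll1$, whose singularities are concentrated along the competing log canonical centers (so as to destroy them) but which is suitably transverse to $Z_1$ (so as to retain it). Setting $D_1'=\lambda D'+H$ and then shaving the relevant coefficients by a negligible amount so as to land exactly on the log canonical threshold, I obtain a log canonical pair with $D_1'\equiv c_1D$ where $c_1=c+\epsilon$, whose only log canonical center, hence whole non-klt locus, is $Z_1$. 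Crucially, the mass needed for $H$ has class $\big((1-\lambda)c+\epsilon\big)D$, which need not be small, yet the net change $c_1-c=\epsilon$ is an arbitrarily small positive number; this realizes $0<c_1-c\ll1$ and gives both that $Z_1=\mathrm{Nklt}(A,D_1')$ contains $o$, i.e. part~1), and the minimality asserted in~2).

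Normality of $Z_1$ then follows from Kawamata's theorem that a minimal log canonical center of a log canonical pair is normal (via subadjunction it in fact has mild, rational-type singularities). For smoothness at $o$ I invoke homogeneity a second time: since the translate in the first step was at our disposal, I may assume that $o$ is a general point of the minimal center; as $Z_1$ is normal it is smooth in codimension one, so $\Sing(Z_1)$ is a proper closed subset of $Z_1$, and after a further general translate $o$ avoids it. Hence $Z_1$ is smooth at $o$, which is the remaining assertion of~2).

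I expect the genuinely delicate step to be the tie-breaking of the second paragraph: producing $H$ with precisely the right singularities to eliminate all competing log canonical centers while retaining $Z_1$, keeping the pair log canonical throughout, and controlling its numerical class so that $c_1-c$ stays an arbitrarily small positive number. This is exactly where the ampleness of $D$—providing an ample supply of auxiliary divisors in $|mD|$—and the Kawamata–Helmke normality and subadjunction results are indispensable; the coefficient bookkeeping that yields $0<c_1-c\ll1$ must be carried out carefully so as never to leave the log canonical chamber.
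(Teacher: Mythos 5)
Your overall skeleton agrees with the paper's proof: pass to the log canonical threshold, run a tie-breaking argument to isolate a single minimal lc center, quote Kawamata's normality theorem for minimal lc centers, and translate at the very end so that the origin becomes a smooth point of the center (this final translation also repairs your opening step, where the claim that $o$ lies on an lc center of $(A,\lambda D')$ does not actually follow from $o\in\mathrm{Nklt}(A,D')$, since the non-klt locus shrinks when you rescale to the threshold). However, the tie-breaking step itself --- which you correctly identify as the heart of the matter --- is described \emph{backwards}, and as written it fails. You propose a divisor $H$ ``whose singularities are concentrated along the competing log canonical centers (so as to destroy them) but which is suitably transverse to $Z_1$ (so as to retain it).'' Adding an effective divisor can never destroy an lc center: non-klt loci are monotone, $\mathrm{Nklt}(A,\Delta)\subseteq\mathrm{Nklt}(A,\Delta+H)$ for $H$ effective. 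Concretely, if $Y$ is a competing lc center of $(A,\lambda D')$ and $H$ has positive multiplicity along $Y$, then $(A,\lambda D'+tH)$ is non-lc along $Y$ for \emph{every} $t>0$, so no ``shaving'' of the coefficient of $H$ ever returns you to a log canonical pair; and if instead you rescale the whole divisor $\lambda D'+H$ down to its log canonical threshold, the non-klt locus concentrates exactly where the singularities are worst, namely along the competing centers $Y$, while $Z_1$ (touched only transversally by $H$) drops out of the non-klt locus. Either way you end up keeping the centers you wanted to kill and killing the one you wanted to keep.

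The correct procedure, which is the paper's, swaps the roles of the two kinds of centers and contains a step your proposal omits entirely: first \emph{scale down} the lc pair, replacing $c_1'D'$ (where $c_1'=\mathrm{lct}(D')$) by $(1-\epsilon_1)c_1'D'$, which is klt --- this subtraction is what removes the competing centers; then add $\epsilon_2 D_2$, where $D_2\in|kD|$ is chosen to \emph{contain} the center $W$ you want to keep and to contain no other lc center, so that for suitable $0<\epsilon_1,\epsilon_2\ll 1$ the pair $(A,(1-\epsilon_1)c_1'D'+\epsilon_2 D_2)$ is lc with all lc centers inside $W$. Two further ingredients are needed to make this run and are missing from your write-up: (i) $W$ must be taken to be a minimal lc center of minimal dimension, so that by Kawamata's result that the intersection of two lc centers is a union of lc centers, every other center either contains $W$ or is disjoint from it --- this is what guarantees a $D_2$ through $W$ missing all other centers exists; and (ii) one round of tie-breaking only yields lc centers \emph{contained in} $W$, not uniqueness, so the paper concludes by induction on the dimension of the minimal center. (Your coefficient bookkeeping --- padding with a small multiple of a general member of $|mD|$ so as to land at $c_1=c+\epsilon$ --- is sound and is the right way to realize $0<c_1-c\ll 1$; it is the geometry of the tie-breaking, not the numerics, that needs to be reversed.)
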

Note that in this case, the multiplier ideal sheaf $\cJ(A, D_1')$ (see \cite[Chapter 9]{Lar2} for the definition) is the ideal sheaf $\cI_{Z_1}$ of $Z_1$.
\begin{proof}
We first take $$c_1'=lct(D'):=inf\{t>0\mid \mathcal J(t\cdot D') \;\mathrm{is} \;\mathrm{non-trivial} \}.$$ Then $c_1'\leq 1$ is a rational number and $(A, c_1'D)$ is log canonical. We now consider the set $CLC(A, c_1'D')$
of log canonical centers of $(A, c_1D')$ (see \cite[Definition 1.3]{Kaw}). Let $W$ be a minimal lc center of $(A, c_1'D')$ of minimal dimension. We know that \cite[Proposition 1.5]{Kaw} the intersection of two lc centers of
$(A, c_1'D')$ is the union of lc centers $(A, c_1'D')$. Then for any subvariety $Y\in CLC(A, c_1'D')$, either $W\subset Y$ or $W\cap Y=\emptyset$.

Since $D$ is ample, we can then take $D_2\in |kD|$ for some $k>0$ sufficiently divisible such that $D_2$ contains $W$, $D_2$ does not contain any other subvarieties in $CLC(A, c_1'D')$, and $D_2$ is smooth away from $W$. By
Tie-breaking, for some $0<\epsilon_1 , \epsilon_2<<1$, the pair $(A, (1-\epsilon_1)c_1'D'+\epsilon_2D_2)$ is log canonical and its lc centers are contained in $W$. By induction on dimensions, we may assume that  $W$ is the
unique lc center of $(A, (1-\epsilon_1)c_1'D+\epsilon_2D_2)$.

Finally, let $z\in W$ be a smooth point of $W$ and considering the translation morphism $t_z: A\rightarrow A$, $x\rightarrow x+z$. Let $D_1'=t_z^*((1-\epsilon_1)c_1'D'+\epsilon_2D_2)$. We conclude that the pair $(A, D_1')$
satisfies both $1)$ and $2)$, where $Z_1=W-z$ is a translation of $W$.
\end{proof}

We also recall  Helmke's induction \cite[Proposition 3.2 and Corollary 4.6]{Hel}.
\begin{prop} \label{ind2}
Let $D$ be an $\mathbb Q$-effective ample divisor on $A$ such that $m_o(D)>g$.
Let  $D_1= c_1D$ be a $\mathbb Q$-effective divisor for some $0<c_1<1$ \footnote{Actually $c_1=\min\{c\mid (A, cD)\; is\; log\; canonical\; at \;o\}$ is the  log-canonical threshold of $(A, D)$ at $o$} such that $(A, D_1)$
is log canonical at $o\in A$ and $Z_1$ is a minimal lc center of $(A, D_1)$through $o$ of dimension $d$. If $(D^{d}\cdot Z_1)\geq g^d\binom{g-1}{d-1}$, then there exists a $\mathbb Q$-effective divisor $D_2'\equiv \epsilon
D$  such that  $c_2:=c_1+\epsilon<1$ and for $D_2=D_1+D_2'$, we have $(A, D_2)$ is log canonical at $o$ with the minimal lc center $Z_2$ through $o$ properly contained in $Z_1$.

\end{prop}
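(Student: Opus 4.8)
The plan is to reduce the statement to a construction carried out entirely on the minimal lc center $Z_1$, and then to transport the resulting singularity back to $A$ by the tie-breaking technique already used in the proof of Lemma \ref{ind1}. Since $Z_1$ is a \emph{minimal} lc center of the log canonical pair $(A, D_1)$ through $o$, Kawamata's theorem guarantees that $Z_1$ is normal, and Kawamata's subadjunction produces an effective $\mathbb{Q}$-divisor $\Theta$ on $Z_1$ with $(K_A + D_1)|_{Z_1} \equiv K_{Z_1} + \Theta$ such that $(Z_1, \Theta)$ is klt in a neighbourhood of the point $o$, which is smooth on $Z_1$ by Lemma \ref{ind1}. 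Thus it suffices to manufacture an effective $\mathbb{Q}$-divisor $B \equiv \epsilon\, D|_{Z_1}$ on $Z_1$, with $\epsilon < 1 - c_1$, whose multiplicity at $o$ is large enough that $o$ becomes a non-klt point of $(Z_1, \Theta + B)$; pushing this singularity into $A$ and perturbing will then cut $Z_1$ down to a strictly smaller center $Z_2$.

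The numerical heart of the argument is the construction of $B$, and this is exactly where the hypothesis $(D^d \cdot Z_1) \geq g^d \binom{g-1}{d-1}$ is consumed. First I would note that a divisor $B \equiv \epsilon D|_{Z_1}$ with $m_o(B) \geq d = \dim Z_1$ forces $o \in \mathrm{Nklt}(Z_1, \Theta+B)$, since on a $d$-dimensional variety smooth at $o$ multiplicity $\geq d$ already destroys klt-ness. To produce such a $B$ with $\epsilon$ small, I would fix $m \gg 0$ divisible and seek a nonzero section of $\mathcal{O}_{Z_1}(mD|_{Z_1})$ vanishing to order $\geq t$ at $o$; dividing its divisor by $m$ yields $B' \equiv D|_{Z_1}$ with $m_o(B') \geq t/m$, and scaling gives the desired $\epsilon \approx dm/t$. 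Such a section exists once $h^0(Z_1, mD|_{Z_1})$ exceeds the number $\binom{t-1+d}{d}$ of conditions imposed by the vanishing, and asymptotic Riemann--Roch on $Z_1$ gives $h^0(Z_1, mD|_{Z_1}) = \frac{(D^d\cdot Z_1)}{d!}m^d + O(m^{d-1})$. Balancing these two quantities shows that the lower bound on $(D^d \cdot Z_1)$ is precisely what lets one take $t$ large relative to $m$, and the refined constant $g^d\binom{g-1}{d-1}$ is the bookkeeping bound that keeps $\epsilon < 1 - c_1$ compatible with the total coefficient staying below $1$ throughout Helmke's induction.

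Finally I would transport $B$ to $A$. Because $D$ is ample, restriction $H^0(A, mD) \to H^0(Z_1, mD|_{Z_1})$ is surjective for $m \gg 0$, so the section above extends and yields an effective $D_2' \equiv \epsilon D$ with $D_2'|_{Z_1} = B$. Setting $D_2 = D_1 + D_2'$, the pair $(A, D_2)$ remains log canonical near $o$ after possibly shrinking $\epsilon$, while the extra singularity created along $Z_1$ means $Z_1$ is no longer a minimal center. Running the perturbation procedure of Lemma \ref{ind1} on $(A, D_2)$ — choosing an auxiliary ample member through the new non-klt locus and invoking Kawamata's intersection property of lc centers — then extracts a minimal lc center $Z_2 \subsetneq Z_1$ through $o$, as required. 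The main obstacle I anticipate is the numerical estimate in the middle paragraph: one must control $h^0(Z_1, mD|_{Z_1})$ on the possibly singular variety $Z_1$ (only its smoothness at $o$ being known) with error terms uniform enough that the comparison with $\binom{t-1+d}{d}$ delivers an $\epsilon$ obeying $c_1 + \epsilon < 1$, and it is the sharpening of this count that forces the exact constant $g^d\binom{g-1}{d-1}$ rather than the cruder threshold $(D^d\cdot Z_1) > d^d/\epsilon^d$.
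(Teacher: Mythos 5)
The paper offers no proof of this statement at all: it is recalled verbatim from Helmke \cite[Proposition 3.2 and Corollary 4.6]{Hel}, so the benchmark is Helmke's own argument, which is a sharpened form of the Angehrn--Siu cutting technique that you outline. Your sketch is the unsharpened version, and it has a genuine gap exactly where the sharpening lies. Your section count on $Z_1$ produces $B\equiv\epsilon D|_{Z_1}$ with $m_o(B)\geq d$ only when $\epsilon\geq d\,(D^d\cdot Z_1)^{-1/d}$ (up to lower-order terms); under the stated hypothesis this floor is about $d/\bigl(g\binom{g-1}{d-1}^{1/d}\bigr)$, a constant depending on $(g,d)$ alone. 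The conclusion, however, demands $c_1+\epsilon<1$ for the \emph{given} $c_1$, and nothing in your argument relates your $\epsilon$ to $1-c_1$: as far as your proof goes, $c_1$ could be $1-10^{-6}$ and the construction would overshoot. You defer this to ``bookkeeping,'' but it is the actual mathematical content of the proposition. Symptomatically, you never use the hypothesis $m_o(D)>g$. That hypothesis is what controls $c_1$: Helmke's induction shows that when the minimal lc center of $(A,c_1D)$ through $o$ has dimension $d$, one has a relation of the shape $c_1\,m_o(D)\leq g-d$, hence $1-c_1>d/g$, which is precisely what beats the floor $\epsilon\approx d/g$ coming from the section count. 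A proof that never invokes $m_o(D)>g$ cannot close this loop.

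A second gap: you take $o$ to be a smooth point of $Z_1$ ``by Lemma \ref{ind1}.'' That is not a hypothesis here. Lemma \ref{ind1} achieves smoothness at $o$ only after tie-breaking \emph{and translating}, which replaces the divisor by a translate and destroys the condition $m_o(D)>g$; in the present proposition $Z_1$ is simply a minimal lc center of $(A,c_1D)$ through $o$ and may be singular there. This is not a side issue: at a point of multiplicity $\mu=\mathrm{mult}_o Z_1$, vanishing to order $t$ imposes roughly $\mu t^d/d!$ conditions rather than $\binom{t-1+d}{d}$, so your count really needs $(D^d\cdot Z_1)$ of size $g^d\mu$, and the binomial factor in the hypothesis is exactly Helmke's universal bound $\mathrm{mult}_o Z_1\leq\binom{g-1}{d-1}$ for minimal lc centers arising in this induction --- it is the worst-case multiplicity of the center, not bookkeeping slack. (Your final step --- extending $B$ from $Z_1$ to $A$ and cutting the center down via subadjunction and tie-breaking --- is standard and can be made rigorous; the two numerical points above are where the proposal genuinely fails, and they are precisely what distinguishes Helmke's Proposition 3.2 from the cruder Angehrn--Siu step.)
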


The subadjunction formula (see \cite{FG}, \cite{Kaw2}, and \cite{Kaw3}) will  be repeated applied.
\begin{theo}\label{subadjunction}Assume that $X$ is a smooth projective  variety and $(X, D)$ is a log canonical pair. Assume that $Z$ is a minimal lc center of $(X, D)$.  There exists an effective $\mathbb Q$-divisor $D_Z$
on $Z$ such that $(Z, D_Z)$ is klt and
 $$(K_X+D)|_Z\sim_{\mathbb Q}K_Z+D_{Z}.$$
\end{theo}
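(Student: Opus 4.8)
The plan is to deduce the statement from the canonical bundle formula for lc-trivial fibrations, which is the mechanism underlying Kawamata's subadjunction. Two facts feed into this: first, that a minimal lc center is normal (Ambro, Fujino), so that the restricted geometry over $Z$ is an honest fibration; and second, that such a fibration carries a canonical bundle formula splitting $(K_X+D)|_Z$ into a boundary (discriminant) part and a moduli part. First I would reduce $(K_X+D)|_Z$ to the base class of an lc-trivial fibration, then handle the two parts separately, expecting essentially no trouble from the discriminant but real difficulty from the moduli part.

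Concretely, I would take a dlt modification $f\colon (Y,\Delta)\to(X,D)$, that is, a projective birational morphism from a $\mathbb{Q}$-factorial dlt pair with $K_Y+\Delta=f^*(K_X+D)$ and $f_*\Delta=D$, chosen so that some component $S$ of the reduced part $\lfloor\Delta\rfloor$ maps onto $Z$; such $S$ exists because $Z$ is an lc center. Divisorial adjunction along $S$ yields a boundary $\Delta_S$ with $(S,\Delta_S)$ dlt and $K_S+\Delta_S=(K_Y+\Delta)|_S$, whence, writing $g:=f|_S\colon S\to Z$,
$$K_S+\Delta_S\sim_{\mathbb{Q}} g^*\big((K_X+D)|_Z\big).$$
By the structure theory of minimal lc centers one may arrange the model so that $(S,\Delta_S)$ is klt over the generic point of $Z$, so that $g$ is an lc-trivial fibration.

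Next I would invoke the canonical bundle formula (Ambro, Fujino--Gongyo, Kawamata) to write $(K_X+D)|_Z\sim_{\mathbb{Q}}K_Z+B_Z+M_Z$, where $B_Z\geq 0$ is the discriminant and $M_Z$ the moduli part. The discriminant causes no difficulty: if the coefficient of $B_Z$ along some prime divisor $P\subsetneq Z$ were equal to $1$, there would be an lc center of $(S,\Delta_S)$ dominating $P$, and its image under $f$, namely $P$ itself, would then be an lc center of $(X,D)$ strictly contained in $Z$, contradicting the minimality of $Z$. Hence every coefficient of $B_Z$ is strictly less than $1$, so $(Z,B_Z)$ is already klt, and it remains only to absorb the moduli part into an effective $\mathbb{Q}$-divisor while keeping the pair klt.

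The hard part will be the moduli term $M_Z$, since the canonical bundle formula only provides that it is nef (indeed $b$-nef after passing to a sufficiently high birational model of $Z$), and nefness alone does not furnish an effective representative. If one is willing to allow an auxiliary ample perturbation, the difficulty evaporates: for an ample $H$ and $\varepsilon>0$ the class $M_Z+\varepsilon H|_Z$ is ample, hence $\mathbb{Q}$-linearly equivalent to a general effective divisor with arbitrarily small coefficients preserving the klt condition, and one obtains $(K_X+D+\varepsilon H)|_Z\sim_{\mathbb{Q}}K_Z+D_Z$. To obtain the clean statement above with no perturbation, I would instead appeal to the finer positivity of the moduli $b$-divisor proved by Fujino--Gongyo, which guarantees that after the dlt reduction the moduli class descends to a nef divisor admitting a suitable effective representative; this descent-and-representation step is the genuine technical core, and it is the step I expect to be the main obstacle. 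In either route, setting $D_Z:=B_Z+M_Z'$ for a general effective representative $M_Z'$ of the moduli class produces an effective $D_Z$ with $(Z,D_Z)$ klt and $(K_X+D)|_Z\sim_{\mathbb{Q}}K_Z+D_Z$.
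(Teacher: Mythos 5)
Your proposal cannot be measured against an argument in the paper, because the paper contains none: Theorem \ref{subadjunction} is quoted as a known result, attributed to \cite{FG}, \cite{Kaw2} and \cite{Kaw3}, and is used later (in Proposition \ref{tech} and Theorem \ref{main}) as a black box. The meaningful comparison is therefore with the cited proof of Fujino--Gongyo \cite{FG}, and your outline is essentially that proof: pass to a birational model and use adjunction to produce a klt-trivial fibration onto $Z$, apply the canonical bundle formula $(K_X+D)|_Z\sim_{\mathbb Q}K_Z+B_Z+M_Z$, derive kltness of the discriminant from minimality of $Z$, and handle the moduli part not through nefness alone but through Ambro's theorem that the moduli $b$-divisor of a klt-trivial fibration is $b$-nef \emph{and abundant}, combined with the lemma of \cite{FG} that a nef and abundant class admits an effective $\mathbb Q$-divisor representative preserving kltness. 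You correctly isolate the moduli part as the genuine core, and you correctly note that without this finer positivity one only recovers Kawamata's weaker statement with an ample perturbation $\epsilon H$, which is the form proved in \cite{Kaw3}.

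Three caveats. First, taking $S$ to be a divisorial component of $\lfloor\Delta\rfloor$ dominating $Z$ does not by itself make $(S,\Delta_S)$ klt over the generic point of $Z$; the standard device is to take a \emph{minimal} lc center (stratum) of the dlt model among those dominating $Z$, so that any lc center of the adjoint pair dominating $Z$ would contradict that minimality. Your phrase ``one may arrange the model'' conceals exactly this step. Second, your discriminant argument controls the coefficients of $B_Z$ along prime divisors of $Z$, but kltness of the pair $(Z,B_Z)$ also requires control of discrepancies of divisors exceptional over $Z$; this follows by running the same minimality argument on the discriminant $b$-divisor, i.e.\ on higher birational models of $Z$, as is done in \cite{FG}. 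Third, the two heavy inputs (Ambro's positivity theorem and the effective-representative lemma) are imported rather than proved, so your text is a reduction to the cited results, not a self-contained proof --- which is entirely appropriate here, since the paper itself only quotes the theorem. It is worth adding that for every application in this paper even the perturbed version of \cite{Kaw3} would suffice, because the subsequent arguments always leave an ample margin ($D''=D'+\epsilon D$ in Proposition \ref{tech}, and $D_2=2D_1+\epsilon L$ in Theorem \ref{main}).
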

\subsection{Intersection numbers}

Assume that $(D^g)>g^g$. Then we know that there exists $D_1'\sim_{\mathbb Q}cD$ with $0<c<1$ such that $(A, D_1')$ is not klt at $o\in A$. We then apply Lemma \ref{ind1}. For some $0<c_1<1$, there exists $D_1\equiv c_1L$
such that an irreducible normal subvariety $Z_1$ of dimension $d_1$ is the unique lc center of $(A, D_1)$ and $o\in Z_1$ is a smooth point.

In general it seems difficult to estimate $(D^{d_1}\cdot Z_1)$, except when $Z_1$ is of dimension $1$ or codimension $1$.

Let $H$ be an effective divisor on $A$, we know that there exists a quotient of abelian varieties with connected fibers $\varphi_{h}: A\rightarrow A_h$ with an ample divisor $H'$ on $A_h$ such that $H$ is equivalent to
$\varphi_h^*H'$. We denote by $K_h$ the kernel of $\varphi_h$. Then the numerical dimension of $H$ is equal to $\dim A_h$.
\begin{lemm}\label{intersection}
Let $D$ be a  $\mathbb Q$-ample divisor on $A$.
\begin{itemize}
\item[1)] For any effective divisor $H$ on $A$ of numerical dimension $k$, we have
$$(D^{g-1}\cdot H)\geq \big(k!(D^{g-k}\cdot K_h)\big)^{\frac{1}{k}}(D^g)^{\frac{k-1}{k}},$$
In particular, if $H$ is ample, we have $(D^{g-1}\cdot H)\geq (D^g)^{\frac{g-1}{g}}.$
\item[2)] Assume that $C$ is a curve generating $A$, then $$(D\cdot C)\geq g\sqrt[g]{\frac{(D^g)}{g!}}>(D^g)^{\frac{1}{g}}.$$
\end{itemize}
\end{lemm}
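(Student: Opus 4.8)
For 1) the plan is to run the Khovanskii--Teissier inequalities on the two nef classes $D$ and $H$. Write $a_i=(D^{g-i}\cdot H^i)$ for $0\le i\le g$; since $D$ is ample and $H=\varphi_h^*H'$ is nef, the Hodge-type (Khovanskii--Teissier) inequalities give that the sequence $(a_i)$ is log-concave, i.e.\ $a_i^2\ge a_{i-1}a_{i+1}$. First I would identify $a_k$ using the numerical dimension: since $H^k=\varphi_h^*\big((H')^k\big)$ and $(H')^k$ is a $0$-cycle of degree $((H')^k)=k!\,h^0(A_h,H')\ge k!$ on the $k$-dimensional $A_h$, while a fibre of $\varphi_h$ is a translate of $K_h$, one gets $a_k=(D^{g-k}\cdot H^k)=((H')^k)\,(D^{g-k}\cdot K_h)\ge k!\,(D^{g-k}\cdot K_h)$. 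Log-concavity forces the ratios $a_1/a_0\ge a_2/a_1\ge\cdots\ge a_k/a_{k-1}$ to be nonincreasing, so $a_k/a_0=\prod_{i=1}^k(a_i/a_{i-1})\le(a_1/a_0)^k$, that is $a_1\ge a_0^{(k-1)/k}a_k^{1/k}$. Substituting the two computations yields
\[
(D^{g-1}\cdot H)\ge (D^g)^{\frac{k-1}{k}}\big(k!\,(D^{g-k}\cdot K_h)\big)^{\frac1k},
\]
which is the first formula; when $H$ is ample one has $k=g$, $K_h$ is a reduced point, and the extra factor is $\ge 1$, giving the stated special case.

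For 2) the factor $g$ will come from the arithmetic--geometric mean inequality applied to a spectral description of $(D\cdot C)$. After replacing $C$ by its normalization (which changes neither side) I would work with the pullback map $f^*\colon H^0(A,\Omega^1_A)\to H^0(C,\Omega^1_C)$; since $C$ generates $A$ this map is injective. Choosing a basis $\omega_1,\dots,\omega_g$ of $H^0(A,\Omega^1_A)$, let $H=(H_{jk})$ be the positive definite Hermitian matrix representing $c_1(D)$ in this basis and let $G=(G_{jk})$, $G_{jk}=\frac{i}{2}\int_C f^*\omega_k\wedge\overline{f^*\omega_j}$, be the Gram matrix of the pulled-back forms for the Hodge inner product; $G$ is positive definite precisely because $f^*$ is injective. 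A direct computation of $\int_C f^*c_1(D)$ then gives $(D\cdot C)=\operatorname{tr}(HG)$. Since $HG$ is conjugate to the positive definite matrix $H^{1/2}GH^{1/2}$, its eigenvalues $\mu_1,\dots,\mu_g$ are positive, and AM--GM yields
\[
(D\cdot C)=\sum_{i}\mu_i\ \ge\ g\Big(\prod_i\mu_i\Big)^{1/g}=g\,\big(\det H\,\det G\big)^{1/g}.
\]

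It therefore suffices to prove the determinantal lower bound $\det H\cdot\det G\ge\chi(D)=(D^g)/g!$; granting it, the conclusion $(D\cdot C)\ge g\big((D^g)/g!\big)^{1/g}$ follows, and the final strict inequality is the elementary fact $g^g>g!$. To establish the bound I would relate $\det H$ to $\chi(D)$ through the standard relation expressing $\chi(D)$ as $\det H$ times the covolume of $\Lambda=H_1(A,\mathbb Z)$, reducing the claim to a lower bound for the Hodge determinant $\det G$ in terms of the period lattice. The periods of the forms $f^*\omega_j$ over $H_1(C,\mathbb Z)$ lie in $\Lambda$, and because $C$ generates $A$ the image of $f_*\colon H_1(C,\mathbb Z)\to\Lambda$ has finite index; feeding this integrality into Riemann's bilinear relations should bound $\det G$ from below by exactly the covolume term needed. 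This period/integrality estimate --- which is essentially the statement that the class of a generating curve dominates the minimal class --- is the step I expect to be the main obstacle; everything else is formal once it is in place.
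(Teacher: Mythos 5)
Part 1) of your argument is correct and is essentially the paper's own proof: the inequality you extract from log-concavity of $a_i=(D^{g-i}\cdot H^i)$ is exactly the Hodge-type inequality the paper quotes from \cite[Corollary 1.6.3]{Lar1}, and your identification $a_k=((H')^k)_{A_h}\,(D^{g-k}\cdot K_h)\geq k!\,(D^{g-k}\cdot K_h)$ via the fibration $\varphi_h$ is the paper's second step verbatim.

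Part 2) is where there is a genuine gap. The paper does not prove this inequality at all: it is quoted as the main result of Debarre \cite{Deb}. You instead set out to reprove Debarre's theorem, and your skeleton --- $(D\cdot C)=\mathrm{tr}(HG)$, AM--GM on the eigenvalues of $H^{1/2}GH^{1/2}$, reduction to $\det H\cdot\det G\geq (D^g)/g!$, equivalently $\det G\geq \mathrm{covol}(\Lambda)$ --- has the right shape and is close to how Debarre actually argues. But the determinantal bound you defer is not a technical verification to be ``fed into Riemann's bilinear relations''; it \emph{is} the theorem. The generation hypothesis enters your argument only qualitatively (to make $G$ nondegenerate), and AM--GM applied to a merely positive-definite $G$ gives no uniform bound: nothing so far prevents $\det G$ from being arbitrarily small compared to $\mathrm{covol}(\Lambda)$, and indeed if $C$ fails to generate $A$ then $\det G=0$ and the conclusion is false. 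So as submitted, part 2) is not proved.

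For what it is worth, the missing step is true and can be completed roughly along the line you gesture at, but it requires the polarization theory of the Jacobian, not just the inclusion of periods in $\Lambda$: the span $W$ of the $f^*\omega_j$ inside $H^0(\tilde C,\Omega^1_{\tilde C})$ can be identified with the tangent space of the abelian subvariety $f^*\widehat A\subseteq \widehat{J(\tilde C)}$, which has dimension $g$ precisely because $C$ generates $A$; since $f_*H_1(\tilde C,\mathbb Z)\subseteq\Lambda$, the dual lattice $\Lambda^*$ pulls back into the full-rank lattice $H^1(\tilde C,\mathbb Z)\cap W$; and by the Riemann bilinear relations the $L^2$ form is the Hermitian form of the canonical principal polarization of the Jacobian, so its restriction to $W$ is again a polarization of a subtorus, whose Euler characteristic is a positive integer. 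Unwinding $\chi\geq 1$ (together with unimodularity of the principal polarization) yields exactly $\det G\geq \mathrm{covol}(\Lambda)$, hence $\det H\cdot\det G\geq \chi(D)$. Carrying this out amounts to reproving \cite{Deb} (see also \cite{BL} for complementary abelian subvarieties); the efficient alternative is what the paper does, namely cite \cite{Deb} for part 2).
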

\begin{proof}
By Hodge type inequalities (see \cite[Corollary 1.6.3]{Lar1}), we have $$(D^{g-1}\cdot H)\geq (D^{g-k}\cdot H^k)^{\frac{1}{k}}(D^g)^{\frac{k-1}{k}}.$$  We also have $(D^{g-k}\cdot H^k)\geq (D^{g-k}\cdot K_h)\cdot
(H'^k)_{A_h}\geq k! (D^{g-k}\cdot K_h)$.

  The second statement is simply the main result of \cite{Deb}.
\end{proof}

This result leads us to ask the following question.
 \begin{qu}\label{intersectionquestion} Let $(A, L)$ be a polarized abelian variety of dimension $g$. Let $Z\subset A$ be an irreducible (geometrically) non-degenerate subvariety of dimension $2\leq d\leq g-2$. Then we have
 $(Z\cdot L^d)\geq (L^g)^{\frac{d}{g}}$.
 \end{qu}
If the polarized abelian variety $(A, L)$ is very general, then the answer is affirmative.
\begin{defn}\label{Hodge}
We say that a polarized abelian variety $(A, L)$ is Hodge theoretically very general if $\dim_{\mathbb Q} H^{k, k}(A,\mathbb Q)=1$, where $$H^{k, k}(A, \mathbb Q):=H^{k, k}(A, \mathbb C)\cap H^{2k}(A, \mathbb Q)$$ is the
vector space of codimension-$k$ Hodge classes.
\end{defn}
 \begin{rema}
 We know by Mattuck (see \cite[Theorem 17.4.1]{BL}) that if $(A, L)$ is very general in the moduli space of polarized abelian varieties with polarization type $(\delta_1,\ldots, \delta_g)$, then $\dim_{\mathbb Q} H^{k,
 k}(A,\mathbb Q)=1$ for $0\leq k\leq g$.
 \end{rema}
 \begin{lemm}\label{Hodgenumber1}
 Assume that $(A, L)$ is Hodge theoretically very general. Then $(L^k\cdot Z)>(L^{g})^{\frac{k}{g}}$ for any irreducible subvariety $Z$ of codimension $k$.
 \end{lemm}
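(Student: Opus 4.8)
The plan is to use the Hodge-theoretic genericity to determine the class of $Z$ exactly, and then to reduce the inequality to two elementary estimates. In the notation of Question~\ref{intersectionquestion}, set $d=\dim Z$ and $c=g-d=\codim Z$, so that the quantity to bound is the $L$-degree $(L^{d}\cdot Z)$ and the assertion is $(L^{d}\cdot Z)>(L^g)^{d/g}$. Because $Z$ is algebraic, its class $[Z]$ is a rational Hodge class and hence lies in $H^{c,c}(A,\mathbb Q)$; by hypothesis this space is one-dimensional, spanned by $[L]^{c}$, which is nonzero since $(L^g)>0$. Therefore $[Z]=\lambda\,[L]^{c}$ for a unique $\lambda\in\mathbb Q$, and intersecting both sides with $L^{d}$ (using $c+d=g$) gives $(L^{d}\cdot Z)=\lambda\,(L^g)$; since $L$ is ample and $Z$ effective, $\lambda>0$. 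Thus everything reduces to a lower bound for $\lambda$.

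To bound $\lambda$ from below I would invoke integrality. The rational Hodge line $H^{c,c}(A,\mathbb Q)$ meets the lattice $H^{2c}(A,\mathbb Z)=\bigwedge^{2c}H^{1}(A,\mathbb Z)$ in a saturated subgroup of rank one; let $\eta$ be a primitive generator. Writing $[L]^{c}=b\,\eta$ and $[Z]=a\,\eta$ with $a,b\in\mathbb Z_{>0}$, the fact that $Z$ is a genuine subvariety forces $a\geq1$, whence $\lambda=a/b\geq 1/b$ and $(L^{d}\cdot Z)\geq (L^g)/b$. It therefore suffices to prove the numerical inequality $b<(L^g)^{c/g}$.

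Here $b$ is exactly the content (the gcd of the coordinates) of $[L]^{c}$, precisely because $\eta$ is primitive in $\bigwedge^{2c}H^{1}(A,\mathbb Z)$. I would compute it in a symplectic basis in which $[L]=\sum_{i=1}^{g}\delta_i\,x_i\wedge y_i$, where $(\delta_1,\dots,\delta_g)$ with $\delta_1\mid\cdots\mid\delta_g$ is the polarization type of $L$. Since the classes $x_i\wedge y_i$ commute and square to zero, the expansion reads $[L]^{c}=c!\sum_{|S|=c}\bigl(\prod_{i\in S}\delta_i\bigr)\bigwedge_{i\in S}(x_i\wedge y_i)$, a sum over $c$-element subsets $S\subset\{1,\dots,g\}$ of distinct basis vectors of $\bigwedge^{2c}H^{1}(A,\mathbb Z)$. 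Because $\delta_1\mid\cdots\mid\delta_g$, the product $\prod_{i=1}^{c}\delta_i$ divides every $\prod_{i\in S}\delta_i$, so $b=c!\prod_{i=1}^{c}\delta_i$.

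It remains to combine two elementary facts, recalling $(L^g)=g!\prod_{i=1}^{g}\delta_i$. First, the $c$ smallest of the $\delta_i$ have geometric mean at most that of all of them, so $\prod_{i=1}^{c}\delta_i\leq\bigl(\prod_{i=1}^{g}\delta_i\bigr)^{c/g}$. Second, the sequence $(n!)^{1/n}$ is strictly increasing (equivalently $n!<(n+1)^{n}$), so $c!<(g!)^{c/g}$ whenever $c<g$. Multiplying gives $b=c!\prod_{i=1}^{c}\delta_i<(g!)^{c/g}\bigl(\prod_{i=1}^{g}\delta_i\bigr)^{c/g}=(L^g)^{c/g}$, which is exactly what was needed, the strictness coming from the second inequality and holding whenever $Z$ is a proper subvariety. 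I expect the one genuinely delicate point to be the identification of $b$ with the content of $[L]^{c}$, namely checking that the generator $\eta$ of the integral Hodge line is primitive in the full cohomology lattice and carrying out the exterior-algebra expansion cleanly; the two closing estimates are routine.
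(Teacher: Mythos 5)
Your proof is correct and takes essentially the same route as the paper: the one-dimensionality of the rational Hodge classes forces $[Z]$ to be a positive integral multiple of the primitive integral class $\frac{1}{c!\,\delta_1\cdots\delta_c}[L]^{c}$, giving $(L^{d}\cdot Z)\geq (L^g)/(c!\,\delta_1\cdots\delta_c)$, and the conclusion then follows from the same two elementary estimates (namely $c!<(g!)^{c/g}$ and $\delta_1\cdots\delta_c\leq(\delta_1\cdots\delta_g)^{c/g}$, which the paper states in the equivalent dual form). The only difference is cosmetic: the paper quotes the integrality and minimality of this class as a known fact, whereas you verify it directly by computing the content of $[L]^{c}$ in a symplectic basis, which is a legitimate self-contained justification of the same input.
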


 \begin{proof}
 We know that $\frac{1}{(g-k)!\delta_1\cdots\delta_{g-k}}[L^{g-k}\ ]$ is integral and is a minimal cohomology class. Since $\dim_{\mathbb Q} H^{g-k, g-k}(A,\mathbb Q)=1$, for any irreducible subvariety $Z$ of dimesion $k$,
 its cohomology class $[Z]$ is a integral multiple of $\frac{1}{(g-k)!\delta_1\cdots\delta_{g-k}}[L^{g-k}\ ]$.
 Thus
 \begin{eqnarray*} (L^k\cdot Z)&\geq &\frac{(L^g)}{(g-k)!\delta_1\cdots\delta_{g-k}}\\
 &=&\frac{g!\delta_{g-k+1}\cdots \delta_{g}}{(g-k)!.}
 \end{eqnarray*}
 It is easy to see that $\frac{g!}{(g-k)!}>(g!)^{\frac{k}{g}}$ and $\delta_{g-k+1}\cdots\delta_{g}>(\delta_1\cdots\delta_g)^{\frac{k}{g}}$. Hence $(L^k\cdot Z)>(L^{g})^{\frac{k}{g}}$.
 \end{proof}
By a similar computation, we have the following result for generic polarized abelian varieties.
 \begin{theo}\label{verygeneral}
 Fix $p\geq -1$ an integer. Assume that $(A, L)$ is a polarized abelian variety of dimension $g$ of polarization type $(\delta_1 ,\ldots, , \delta_g)$. Assume that
 \begin{itemize}
 \item[(1)] $(L^g)>((p+2)g)^g$;
 \item[(2)] $(p+2)g\geq (g-i)\delta_i$ for $1\leq i\leq g$;
 \item[(3)]$\dim_{\mathbb Q} H^{k, k}(A,\mathbb Q)=1$ for all $1\leq k\leq g$,\end{itemize}then $r'(L)<\frac{1}{p+2}$ and hence $L$ satisfies property $(N_p)$.
 \end{theo}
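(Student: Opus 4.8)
The plan is to produce, from the numerical data $(1)$--$(3)$, an effective $\mathbb{Q}$-divisor $D_\infty\equiv cL$ with $c<\frac{1}{p+2}$ for which the origin $o$ is an isolated component of $\mathrm{Nklt}(A,D_\infty)$. By definition this gives $r'(L)<\frac{1}{p+2}$, whence $\beta(L)\le r'(L)<\frac{1}{p+2}$ by Ito's inequality, and then $L$ satisfies $(N_p)$ by Caucci's criterion. To build $D_\infty$ I would run the Kawamata--Helmke induction of Lemma \ref{ind1} and Proposition \ref{ind2} with respect to the fixed ample class $D:=\frac{1}{p+2}L$, tracking the coefficient relative to $D$. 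Condition $(1)$ gives $(D^g)=\frac{(L^g)}{(p+2)^g}>g^g$, so there is an effective representative of $D$ with $m_o(D)>g$; in particular $(A,cD)$ fails to be klt at $o$ for some $c<1$, and Lemma \ref{ind1} yields a log canonical pair whose minimal lc center $Z_1\ni o$ is irreducible, normal and smooth at $o$, with associated divisor $\equiv c_1D$, $c_1<1$.

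The heart of the argument is the inductive descent. Suppose at some stage the minimal lc center through $o$ is an irreducible subvariety $W$ of dimension $d\ge 1$, with associated divisor $\equiv c'D$, $c'<1$. To apply Proposition \ref{ind2} and pass to a strictly smaller center (at the cost of increasing the coefficient while keeping it below $1$), it suffices to verify the numerical hypothesis $(D^d\cdot W)\ge g^d\binom{g-1}{d-1}$; note $m_o(D)>g$ holds throughout since $D$ is fixed. Because $(A,L)$ is Hodge theoretically very general, the class $[W]$ is a positive integral multiple of the minimal class $\frac{1}{(g-d)!\,\delta_1\cdots\delta_{g-d}}[L^{g-d}]$ exactly as in Lemma \ref{Hodgenumber1}, so
\begin{equation*}
(D^d\cdot W)=\frac{(L^d\cdot W)}{(p+2)^d}\ \ge\ \frac{1}{(p+2)^d}\cdot\frac{g!}{(g-d)!}\,\delta_{g-d+1}\cdots\delta_g.
\end{equation*}
Hence everything reduces to the arithmetic inequality
\begin{equation}\label{keyineq}
\frac{g!}{(g-d)!}\,\delta_{g-d+1}\cdots\delta_g\ >\ \big((p+2)g\big)^d\binom{g-1}{d-1}\qquad(1\le d\le g-1),
\end{equation}
which is where conditions $(1)$ and $(2)$ enter decisively; this is the step I expect to be the main obstacle.

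To prove \eqref{keyineq} I would combine the global lower bound $\delta_1\cdots\delta_g>\frac{((p+2)g)^g}{g!}$ coming from $(1)$ with the pointwise upper bounds $\delta_i\le\frac{(p+2)g}{g-i}$ coming from $(2)$. The latter give
\begin{equation*}
\delta_1\cdots\delta_{g-d}\ \le\ \prod_{i=1}^{g-d}\frac{(p+2)g}{g-i}\ =\ \frac{((p+2)g)^{g-d}(d-1)!}{(g-1)!},
\end{equation*}
so that $\delta_{g-d+1}\cdots\delta_g>\frac{((p+2)g)^d(g-1)!}{g!\,(d-1)!}=\frac{((p+2)g)^d}{g\,(d-1)!}$; multiplying by $\frac{g!}{(g-d)!}$ turns the right-hand side into exactly $\binom{g-1}{d-1}((p+2)g)^d$, which yields \eqref{keyineq} and hence $(D^d\cdot W)>g^d\binom{g-1}{d-1}$. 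With the hypothesis of Proposition \ref{ind2} verified at every dimension $d$, the induction descends the minimal lc center through $o$ one dimension at a time; since the dimension strictly drops it terminates after at most $g$ steps in the $0$-dimensional center $\{o\}$, and the coefficient relative to $D$ stays strictly below $1$ throughout. The resulting divisor $D_\infty\equiv cD=\frac{c}{p+2}L$ with $c<1$ has $o$ as an isolated component of its non-klt locus, giving $r'(L)\le\frac{c}{p+2}<\frac{1}{p+2}$ and completing the proof.
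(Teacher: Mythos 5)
Your proposal is correct and is essentially the paper's own proof: the paper likewise runs the Kawamata--Helmke descent (Lemma \ref{ind1} and Proposition \ref{ind2}) for $D=\frac{1}{p+2}L$, uses hypothesis $(3)$ to write the class of each lc center as an integral multiple of the minimal class, and combines $(1)$ and $(2)$ in exactly your arithmetic chain to get $(D^d\cdot W)\geq g^d\binom{g-1}{d-1}$, concluding via Ito's inequality $\beta(L)\leq r'(L)$ and Caucci's criterion. Your write-up merely makes explicit the per-step verification that the paper compresses into a single displayed inequality followed by ``we continue this process.''
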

 \begin{proof} Let $D=\frac{1}{p+2}L$.
Since $(D^g)>(g)^g$, we may assume that $D$ is an effective $\mathbb Q$-divisor with $m_o(D)>g$. Then there exists $D_1=c_1D$ for some $0<c_1<1$ such that
$(A, D_1) $ is log canonical at $o$ and the minimal lc center through $o$ is an normal subvariety $Z_1$ of dimension $k\geq 0$. If $k=0$, we know that $r'(L)=\frac{1}{p+2}r'(D)<\frac{1}{p+2}$ and we conclude by
\cite[Proposition 1.10]{Ito}.

If $k>0$,  by assumption $(2)$ and $(3)$,
we have
\begin{eqnarray*}(D^k\cdot Z_1)&\geq &\frac{1}{(p+2)^k}\frac{(L^g)}{(g-k)!\delta_1\cdots \delta_{g-k}}\geq \frac{1}{(p+2)^{k}}\frac{g!\delta_1\cdots\delta_g}{(g-k)!\delta_1\cdots\delta_{g-k}}\\
&\geq & (p+2)^{g-k}\frac{g^g}{(g-k)!\delta_1\cdots\delta_{g-k}}\geq g^k \frac{((p+2)g)^{g-k}}{(g-k)!\delta_1\cdots\delta_{g-k}}\\
&\geq &g^k\frac{(g-1)\cdots k}{(g-k)!}=g^k\binom{g-1}{k-1}.
\end{eqnarray*}
 Then by Proposition \ref{ind2}, there exists $D_2\equiv c_2D$ with $c_1<c_2<1$ such that $(A, D_2)$ is lc at $o$ and the minimal lc center through $o$ is $Z_2\subsetneqq Z_1$. We continue this process to yield that
 $r'(D)<1$ and conclude by \cite[Proposition 1.10]{Ito}.
 \end{proof}

\begin{coro} Assume that $(A, L)$ is a very general polarized abeian variety of type $(1,\ldots,1, \delta_g)$, then if $(L^g)>((p+2)g)^g$, $L$ satisfies property $(N_p)$.
\end{coro}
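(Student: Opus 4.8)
The plan is to deduce this corollary directly from Theorem \ref{verygeneral}, by checking that its three hypotheses are met for a polarization of type $(1,\ldots,1,\delta_g)$. First, hypothesis (1), namely $(L^g)>((p+2)g)^g$, is precisely what we assume. Next, hypothesis (3), that $\dim_{\mathbb Q} H^{k,k}(A,\mathbb Q)=1$ for all $1\leq k\leq g$, is exactly the meaning of ``very general'' here: by Mattuck's theorem, as recorded in the Remark following Definition \ref{Hodge}, a very general member of the moduli space of polarized abelian varieties of the given type has one-dimensional space of Hodge classes in every degree.

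The only hypothesis requiring genuine verification is (2), that $(p+2)g\geq (g-i)\delta_i$ for all $1\leq i\leq g$, and this is where the special shape of the polarization type is decisive. For $1\leq i\leq g-1$ we have $\delta_i=1$, so $(g-i)\delta_i=g-i\leq g-1$; since $p\geq -1$ gives $p+2\geq 1$, we get $(p+2)g\geq g>g-1\geq (g-i)\delta_i$. For $i=g$ the right-hand side is $(g-g)\delta_g=0\leq (p+2)g$. Hence (2) holds for every $i$, and crucially it holds irrespective of the size of $\delta_g$, so no constraint on $\delta_g$ beyond that already imposed by (1) is needed.

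With all three hypotheses of Theorem \ref{verygeneral} in force, we conclude $r'(L)<\frac{1}{p+2}$, and therefore $L$ satisfies property $(N_p)$.

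I do not anticipate any real obstacle: the entire content lies in Theorem \ref{verygeneral}, and the corollary is the observation that condition (2) becomes automatic for polarizations of type $(1,\ldots,1,\delta_g)$. The one point worth double-checking is the extremal case $p=-1$ (so $p+2=1$), where the inequality in (2) is tightest; even there $(p+2)g=g>g-1$, so the argument goes through unchanged.
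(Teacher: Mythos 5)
Your proposal is correct and is exactly the paper's (implicit) argument: the corollary is stated there without proof as an immediate consequence of Theorem \ref{verygeneral}, and your verification that hypothesis (2) is automatic for type $(1,\ldots,1,\delta_g)$ --- since $\delta_i=1$ gives $(g-i)\delta_i\leq g-1<(p+2)g$ for $i<g$ and the factor $g-i$ vanishes at $i=g$ --- together with Mattuck's theorem for hypothesis (3), is precisely the intended routine check.
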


 \subsection{Generic vanishing and Cohomological rank functions}
 For an abelian variety $A$ of dimension $g$, we denote by $\Pic^0(A)$ the dual abelian variety of $A$. For any coherent sheaf $\cF$ on $A$, we denote by $$V^i(\cF):=\{P\in \Pic^0(A)\mid h^i(A, \cF\otimes P)\neq 0\}$$ the
 $i$-th cohomological support loci of $\cF$. We say that $\cF$ is IT$^0$ if $V^i(\cF)=\emptyset$ for $i\geq 1$. We say that $\cF$ is M-regular (resp. GV) if $\codim_{\Pic^0(A)}V^i(\cF)>i$ (resp.
 $\codim_{\Pic^0(A)}V^i(\cF)\geq i$) for $i\geq 1$. Another way to define M-regular or GV sheaves is through the Fourier-Mukai functor. Let $\cP$ be the normalized Poincar\'e line bundle on $A\times \Pic^0(A)$. We denote by
 $$\Phi_{\cP}:  \mathbf{D}^b(A)\rightarrow \mathbf{D}^b(A)$$  the Fourier-Mukai functor induced by $\cP$ between the derived categories of bounded complex of coherent sheaves on $A$ and $\Pic^0(A)$. Then  we know that a
 coherent sheaf $\cF$ is M-regular (resp. GV) if $\codim_{\Pic^0(A)}\mathrm{Supp}R^i\Phi_{\cP}(\cF)>i$ (resp. $\codim_{\Pic^0(A)}\mathrm{Supp}R^i\Phi_{\cP}(\cF)\geq i$) for $i\geq 1$ (see \cite{PP}).

 Let $a: X\rightarrow A$ be a morphism from a smooth projective variety to $A$ and $a$ is generically finite over its image. Then by \cite{GL}, we know that $a_*\omega_X$ is GV. Moreover, if $a(X)$ is not fibred by abelian
 subvarieties of $A$, then $a_*\omega_X$ is M-regular (see for instance \cite[Lemma 2.1]{JLT}).

We recall the definition of cohomological rank functions over an abelian variety $A$ of dimension $g$. Let $\cF$ be a coherent sheaf on $A$ and let $L$ be an ample line bundle on $A$, for a rational number $t\in \mathbb Q$,
$$h_{\cF, L}^i(t):=\frac{1}{M^{2g}}h^i(A,\pi_M^*\cF\otimes L^{M^2t}\otimes Q),$$ where $M$ is a sufficiently divisible integer such that $M^2t\in \mathbb Z$, $\pi_M: A\rightarrow A$ is the multiplication-by-$M$ map, and
$Q\in\Pic^0(A)$ is  general. It is easy to check that $h^i_{\cF, L}: \mathbb Q\rightarrow \mathbb Q$ is a well-defined map. In \cite{JP}, it has been proved that $h^i_{\cF, L}$ can be extended to a continuous function from
$\mathbb R$ to $\mathbb R$.

If $D\equiv cL$ for some rational number $c>0$, we can similarly define $h^i_{\cF, D}(t):=h^i_{\cF, L}(ct)$ for any $t\in \mathbb Q$.  For any ample $\mathbb Q$-divisor $D$, we define $$\beta(D):=\mathrm{min}\{t\in \mathbb
Q\mid h^1_{\cI_o, D}(t)=0\},$$ where $\cI_0\subset \cO_A$ is the ideal sheaf of the origin $o$ of $A$. It is clear that if an ample $\mathbb Q$-divisor $D$ is numerically equivalent to $cL$, then $\beta(D)=c\beta(L)$.

 We will use the following facts:
  \begin{itemize}\label{facts}
 \item[(1)] (\cite[Theorem 1.1]{C}) if $\beta(L)<\frac{1}{p+2}$, $L$ satisfies property   $(N_p)$;
 \item[(2)] (\cite[Proposition 3.1 and Theorem 3.2]{PP3})Assume that $\cF$ is a GV sheaf on $A$ and $V$ is a locally free IT$^0$ (resp. GV) sheaf on $A$, then $\cF\otimes V$ is still IT$^0$ (resp. GV);
  \item[(3)](\cite[Theorem 5.2]{JP}) Let $\cF$ be a GV sheaf on $A$, then $h^i_{\cF, L}(t)=0$ for $t>0$;
 \item[(4)] Given a short exact sequence of coherent sheaves on $A$
 $$0\rightarrow \cF_1\rightarrow \cF_2\rightarrow \cF_3\rightarrow 0,$$ then we have the long exact sequence $$\cdots h^0_{\cF_3, L}(t)\rightarrow h^1_{\cF_1, L}(t)\rightarrow h^1_{\cF_2, L}(t)\rightarrow h^1_{\cF_3,
 L}(t)\rightarrow \cdots.$$

\end{itemize}

 Given an irreducible subvariety $Z$ of $A$ containing the origin $o$ as a smooth point, we will denote by $\cI_{o, Z}$ the ideal sheaf of $o$ in $Z$. Thus we have $$0\rightarrow \cI_Z\rightarrow \cI_o\rightarrow \cI_{o,
 Z}\rightarrow 0.$$

 Combining $(4)$ above and Nadel vanishing, we have the following criterion.
 \begin{lemm}\label{criterion}
 Let $D$ be a $\mathbb Q$-ample divisor. Assume that  there exists $D'\equiv cD$ for some rational number $0<c<1$ such that $\cJ(D')=\cI_Z$ for some irreducible normal subvariety $Z$ containing $o$ as a smooth point, and
 $h^1_{\cI_{o, Z}, D}(1-\epsilon)=0$ for some $0<\epsilon<1-c$, then $\beta(D)<1$.
 \end{lemm}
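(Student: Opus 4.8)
The plan is to exploit the short exact sequence $0\to\cI_Z\to\cI_o\to\cI_{o,Z}\to0$ recorded just before the statement, and to read the associated long exact sequence of cohomological rank functions (fact (4), with $D$ in place of $L$) at the single value $t=1-\epsilon$. In the relevant stretch
$$h^1_{\cI_Z, D}(1-\epsilon)\to h^1_{\cI_o, D}(1-\epsilon)\to h^1_{\cI_{o,Z}, D}(1-\epsilon)$$
the right-hand term vanishes by hypothesis, so as soon as the left-hand term also vanishes the middle term is squeezed to zero by exactness. The whole problem thus reduces to proving $h^1_{\cI_Z, D}(1-\epsilon)=0$. Granting that, and recalling that $\beta(D)$ is by definition the minimum of the $t$ with $h^1_{\cI_o, D}(t)=0$, the membership of $1-\epsilon$ in that set yields $\beta(D)\le 1-\epsilon<1$, as required.

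To prove $h^1_{\cI_Z, D}(1-\epsilon)=0$ I would invoke Nadel vanishing through the hypothesis $\cJ(D')=\cI_Z$ with $D'\equiv cD$. Unwinding the definition, for $M$ sufficiently divisible (so that $M^2(1-\epsilon)\in\mathbb{Z}$) and $Q\in\Pic^0(A)$ general,
$$h^1_{\cI_Z, D}(1-\epsilon)=\frac{1}{M^{2g}}\,h^1\bigl(A,\ \pi_M^*\cI_Z\otimes\cO_A(M^2(1-\epsilon)D)\otimes Q\bigr).$$
Since $\pi_M$ is an isogeny, hence étale, multiplier ideals pull back: $\pi_M^*\cI_Z=\pi_M^*\cJ(D')=\cJ(\pi_M^*D')$, while $\pi_M^*D'\equiv cM^2D$. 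The twisting line bundle $\cO_A(M^2(1-\epsilon)D)\otimes Q$ differs from $\pi_M^*D'$ by a class numerically equivalent to $M^2(1-\epsilon-c)D$, which is ample exactly because $\epsilon<1-c$ (and $Q$, being numerically trivial, does not disturb ampleness). Nadel vanishing applied to $\pi_M^*D'$ therefore kills the $h^1$ above, giving $h^1_{\cI_Z, D}(1-\epsilon)=0$.

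The only genuine friction I anticipate is in this Nadel step, since it must be run for the cohomological rank function rather than for an honest ample line bundle on $A$: one has to justify the étale pullback identity $\pi_M^*\cJ(D')=\cJ(\pi_M^*D')$, and observe that the general $\Pic^0$-twist is harmless because ampleness is a numerical condition. Both are standard, and once they are in place the vanishing is immediate. The remaining ingredients---exactness of the long exact sequence of fact (4) and the reading-off of $\beta(D)<1$ from the definition---are purely formal.
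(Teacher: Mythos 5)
Your proof is correct and follows exactly the route the paper intends: the paper offers no written proof of Lemma \ref{criterion} beyond the remark ``Combining $(4)$ above and Nadel vanishing,'' and your argument --- the long exact sequence for $0\to\cI_Z\to\cI_o\to\cI_{o,Z}\to 0$ at $t=1-\epsilon$, plus Nadel vanishing applied to $\cJ(\pi_M^*D')=\pi_M^*\cI_Z$ using $K_A=0$ and the ampleness of the numerical difference $M^2(1-\epsilon-c)D$ --- is precisely that sketch made rigorous. The \'etale pullback identity for multiplier ideals that you flag as the only friction point is the same fact the paper itself invokes later (``Since $\pi_M$ is \'etale, we have $\cI_{Z^{(M)}}=\cJ(\pi_M^*D_1)$''), so nothing is missing.
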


\begin{coro}\label{subabelian} Assume that Conjecture \ref{main-problem1} holds in dimension $\leq g-1$.

Let $(A, L)$ be a polarized abelian variety of dimension $g$ and $p\geq -1$ an integer. Let $D=\frac{1}{p+2}L$. Assume that $(D^{\dim B}\cdot B)>(\dim B)^{\dim B}$ for all abelian subvarieties $B$ of $A$.
  If there exists an effective divisor $D'\equiv cD$ for some rational number $0<c<1$ such that $\cJ(D')=\cI_B$, where $B$ is an abelian subvariety of $A$, then $\beta(L)<\frac{1}{p+2}$.
\end{coro}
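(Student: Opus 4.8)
The plan is to produce, from the given data, the one remaining hypothesis of Lemma \ref{criterion} for $Z=B$ and then invoke that lemma. Write $D=\frac{1}{p+2}L$. Since $h^1_{\cI_o,D}(t)=h^1_{\cI_o,L}\big(\tfrac{t}{p+2}\big)$, the conclusion $\beta(L)<\frac{1}{p+2}$ is equivalent to $\beta(D)<1$, which is exactly what Lemma \ref{criterion} delivers once its hypotheses hold; property $(N_p)$ then follows from \cite[Theorem 1.1]{C}. Before the main case I dispose of $\dim B=0$: here $\cI_B=\cI_o$, so $D'\equiv cD$ already exhibits $o$ as an isolated component of $\mathrm{Nklt}(A,D')$, giving $\beta(D)\le r'(D)\le c<1$ by \cite[Proposition 1.10]{Ito}. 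So assume $b:=\dim B$ satisfies $1\le b\le g-1$. As an abelian subvariety, $B$ is smooth, irreducible, normal, and contains $o$ as a smooth point, so by Lemma \ref{criterion} it suffices to prove $h^1_{\cI_{o,B},D}(1-\epsilon)=0$ for some $0<\epsilon<1-c$.

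The key step is a restriction formula identifying this cohomological rank function on $A$ with one on $B$. Let $\iota\colon B\hookrightarrow A$ be the inclusion and $\cI'$ the ideal sheaf of the origin inside $B$, so that $\cI_{o,B}=\iota_*\cI'$. I claim that for every coherent sheaf $\cF$ on $B$ and every $t$,
$$h^i_{\iota_*\cF,\,L}(t)=h^i_{\cF,\,L|_B}(t).$$
To see this, analyze the multiplication map $\pi_M$ used to define the left-hand side. The preimage $\pi_M^{-1}(B)$ is a disjoint union of $M^{2(g-b)}$ translates of $B$, and on each translate $\pi_M$ restricts to an isogeny onto $B$ of degree $M^{2b}$. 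Flat base change gives $\pi_M^*\iota_*\cF\cong\iota'_*\psi_M^*\cF$ for the induced finite map $\psi_M\colon\pi_M^{-1}(B)\to B$, and because the restriction $\Pic^0(A)\to\Pic^0(B)$ is surjective, a general $Q\in\Pic^0(A)$ restricts to a general line bundle on each of the finitely many translates. Summing the $M^{2(g-b)}$ identical contributions, each equal to $M^{2b}h^i_{\cF,L|_B}(t)$, and dividing by $M^{2g}$ yields the formula. This bookkeeping---checking that $Q$ stays general on every translate and that the degrees match up---is the part I expect to require the most care.

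Granting the formula, it remains to show $h^1_{\cI',\,D|_B}(1-\epsilon)=0$ on $B$, i.e. $1-\epsilon\ge\beta(D|_B)$, which holds for small $\epsilon>0$ as soon as $\beta(D|_B)<1$. Now $L|_B$ is an ample polarization on the abelian variety $B$ of dimension $b\le g-1$, and every abelian subvariety $B'\subseteq B$ is also an abelian subvariety of $A$ with $\big((D|_B)^{\dim B'}\cdot B'\big)=(D^{\dim B'}\cdot B')>(\dim B')^{\dim B'}$ by hypothesis. Thus $(B,L|_B)$ satisfies the hypotheses of Conjecture \ref{main-problem1} in dimension $b\le g-1$, which we are assuming, so $\beta(L|_B)<\frac{1}{p+2}$, equivalently $\beta(D|_B)<1$. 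Choosing $0<\epsilon<\min\{1-\beta(D|_B),\,1-c\}$ produces the required vanishing, and Lemma \ref{criterion} then gives $\beta(D)<1$, i.e. $\beta(L)<\frac{1}{p+2}$, completing the argument.
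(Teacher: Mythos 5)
Your proposal is correct and follows essentially the same route as the paper: the paper's proof is literally the one sentence that one applies Lemma \ref{criterion} together with the assumed case of Conjecture \ref{main-problem1} in dimension $\leq g-1$, and your argument supplies exactly the details left implicit there, namely the restriction identity $h^i_{\iota_*\cF,\,L}(t)=h^i_{\cF,\,L|_B}(t)$ (via the decomposition of $\pi_M^{-1}(B)$ into $M^{2(g-b)}$ translates of $B$ and surjectivity of $\Pic^0(A)\to\Pic^0(B)$) that transfers the lower-dimensional conjecture for $(B,L|_B)$ into the hypothesis $h^1_{\cI_{o,B},D}(1-\epsilon)=0$ of Lemma \ref{criterion}, plus the trivial case $\dim B=0$. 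This matches the intended reading of the paper's proof, so there is nothing to correct.
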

\begin{proof}
we just need to apply Lemma \ref{criterion} and the hypothesis that Conjecture \ref{main-problem1} holds in dimension $\leq g-1$.
\end{proof}

The above lemma is  simple but is quite crucial.

\begin{lemm}\label{ideal-GV}
Let $Z$ be a subvariety of $A$ containing $o$ as  a smooth point and let $\rho: Z'\rightarrow Z$ be a desingularization of $Z$. Assume that $Z'$ is of general type. Then for any $Q\in \Pic^0(Z')$, $\rho_*(\omega_{Z'}\otimes
Q)$ is M-regular and $\rho_*(\omega_{Z'}\otimes Q)\otimes \cI_{o, Z}$ is GV.
\end{lemm}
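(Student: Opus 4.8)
The plan is to deduce the second assertion from the first, so I first establish the M-regularity of $\cF:=\rho_*(\omega_{Z'}\otimes Q)$ and then analyze a short exact sequence at $o$. For the first assertion, consider the composition $a:=\iota\circ\rho\colon Z'\to A$, where $\iota\colon Z\hookrightarrow A$ is the inclusion; this is generically finite onto $Z$. The key geometric input is that $Z$ is not fibred by abelian subvarieties of $A$: indeed, if $Z$ were invariant under translation by a positive-dimensional abelian subvariety $B\subset A$, the composition $Z'\to Z\to Z/B$ would be a fibration whose general fibre is birational to a translate of $B$, hence of Kodaira dimension $0$, and by the easy addition formula one would get $\kappa(Z')\le\dim(Z/B)<\dim Z'$, contradicting that $Z'$ is of general type. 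Granting this, the M-regularity of $\cF$ follows, exactly as in the untwisted statement recalled before the lemma (see \cite{GL} and \cite[Lemma 2.1]{JLT}), from the generic vanishing theorem applied to the twisted sheaf $\omega_{Z'}\otimes Q$: since $a$ is generically finite and $a(Z')=Z$ is not fibred by abelian subvarieties, $a_*(\omega_{Z'}\otimes Q)$ is M-regular on $A$, the image not being fibred by abelian subvarieties forcing the strict codimension inequality defining M-regularity.

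For the second assertion I use that $\rho$ may be taken to be an isomorphism over the smooth locus of $Z$, so that $\cF$ is locally free of rank one in a neighbourhood of the smooth point $o$. Tensoring the structure sequence $0\to\cI_{o,Z}\to\cO_Z\to\cO_o\to0$ (with $\cO_o$ the skyscraper at $o$) by $\cF$, and using $\mathrm{Tor}_1^{\cO_Z}(\cF,\cO_o)=0$, which holds because $\cF$ is free near $o$, I obtain a short exact sequence of sheaves on $A$
$$0\longrightarrow \cF\otimes\cI_{o,Z}\longrightarrow \cF\longrightarrow \cF\otimes\cO_o\longrightarrow 0,$$
where $\cF\otimes\cO_o\isom\cO_o$ because $\cF$ is locally free of rank one near $o$. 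Twisting by an arbitrary $P\in\Pic^0(A)$ and taking cohomology gives $H^i((\cF\otimes\cI_{o,Z})\otimes P)\isom H^i(\cF\otimes P)$ for $i\ge 2$, together with the exact sequence
$$H^0(\cF\otimes P)\xrightarrow{\ \mathrm{ev}_o\ } \C \longrightarrow H^1((\cF\otimes\cI_{o,Z})\otimes P)\longrightarrow H^1(\cF\otimes P)\longrightarrow 0.$$
Hence $V^i(\cF\otimes\cI_{o,Z})=V^i(\cF)$ for $i\ge 2$, which has codimension $>i$ since $\cF$ is M-regular; and $V^1(\cF\otimes\cI_{o,Z})$ equals the union of $V^1(\cF)$ and the locus where $\mathrm{ev}_o$ fails to be surjective. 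The former has codimension $>1$. For the latter I recall that M-regular sheaves are continuously globally generated (Pareschi--Popa \cite{PP}), so for general $P\in\Pic^0(A)$ the evaluation map $\mathrm{ev}_o$ onto the one-dimensional fibre is surjective; by semicontinuity the non-surjectivity locus is a proper closed subset, hence of codimension $\ge 1$. Therefore $\codim V^i(\cF\otimes\cI_{o,Z})\ge i$ for all $i\ge 1$, that is, $\cF\otimes\cI_{o,Z}$ is GV.

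I expect the genuine difficulty to lie entirely in the first step, and specifically in controlling the twist: the cited references are phrased for $a_*\omega_{Z'}$, whereas here $Q\in\Pic^0(Z')$ need not be pulled back from $\Pic^0(A)$, so one must verify that twisting by such a class still yields generic vanishing with the codimension bounds defining M-regularity. Once this twisted generic vanishing is granted, everything in the second step is a formal consequence of the M-regularity of $\cF$ (through continuous global generation) and of the vanishing of $\mathrm{Tor}_1$ at the smooth point $o$.
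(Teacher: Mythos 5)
Your proposal is correct and takes essentially the same approach as the paper: the paper likewise deduces that $\rho_*(\omega_{Z'}\otimes Q)\otimes\cI_{o,Z}$ is GV from the short exact sequence $0\to \rho_*(\omega_{Z'}\otimes Q)\otimes\cI_{o,Z}\to\rho_*(\omega_{Z'}\otimes Q)\to\mathbb{C}_o\to 0$ (valid because the sheaf is locally free of rank one near the smooth point $o$), identifying $V^j$ for $j\geq 2$ and using continuous global generation of M-regular sheaves to make $V^1$ proper, while the M-regularity of $\rho_*(\omega_{Z'}\otimes Q)$ itself --- including the twist by an arbitrary $Q\in\Pic^0(Z')$, precisely the point you flag as needing verification --- is simply quoted as well-known from [JLT, Lemma 2.1]. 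Your additional argument that general type forces $Z$ not to be fibred by abelian subvarieties is a detail the paper leaves implicit in that citation.
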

\begin{proof}
It is well-known that $\rho_*(\omega_{Z'}\otimes Q)$ is M-regular when $Z'$ is of general type (see for instance \cite[Lemma 2.1]{JLT}). Note that $\rho_*(\omega_{Z'}\otimes Q)$ is locally free of rank one around $o$, we
have $$o\rightarrow \rho_*(\omega_{Z'}\otimes Q)\otimes \cI_{o, Z}\rightarrow  \rho_*(\omega_{Z'}\otimes Q)\rightarrow \mathbb C_o\rightarrow 0.$$
Hence $V^j(\rho_*(\omega_{Z'}\otimes Q)\otimes \cI_{o, Z})=V^j(\rho_*(\omega_{Z'}\otimes Q))$ has codimension at least $j+2$ in $\Pic^0(A)$ for $j\geq 2$.
Since $\rho_*(\omega_{Z'}\otimes Q)$ is M-regular, it is continuous globally generated by \cite[Proposition 2.13]{PP}. Thus for $P\in \Pic^0(A)$ general, $H^0(\rho_*(\omega_{Z'}\otimes Q)\otimes P)\rightarrow \mathbb C_o $
is surjective. Hence $V^1(\rho_*(\omega_{Z'}\otimes Q)\otimes \cI_{o, Z})$ is a proper subset of $\Pic^0(A)$. Thus $\rho_*(\omega_{Z'}\otimes Q)\otimes \cI_{o, Z}$ is GV.
\end{proof}

\begin{rema}\label{ideal1}Assume that $\cF$ is a M-regular rank $1$ sheaf supported on $Z$, which is locally free around $o$,  then the same argument also shows that $\cF\otimes \cI_{o, Z}$ is GV.
\end{rema} 

The following result deals the cases when the minimal lc center is a divisor or is a divisor in an abelian subvariety of $A$.

  \begin{prop}\label{tech}
Assume that Conjecture \ref{main-problem1} holds in dimension $\leq g-1$.

  Let $(A, L)$ be a polarized abelian variety of dimension $g$ and $p\geq -1$ an integer. Let $D=\frac{1}{p+2}L$. Assume that $(D^{\dim B}\cdot B)>(\dim B)^{\dim B}$ for all abelian subvarieties $B$ of $A$ and there exists
  an effective divisor $D'\equiv cD$ for some rational number $0<c<1$ such that $\cJ(D')=\cI_Z$. Then in the following cases, we have $\beta(L)<\frac{1}{p+2}$:
  \begin{itemize}
  \item[(1)] $Z$ is a divisor on $A$;
  \item[(2)] $Z$ generates an abelian subvariety $B$, and there exists an integral divisor $H_B$ on $B$ such that $D'|_B-H_B$ is a nef  $\mathbb Q$-divisor on $B$ and $H_B|_Z\sim K_Z$ (thus $Z$ is Gorenstein).
  \end{itemize}
 \end{prop}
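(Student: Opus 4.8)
The plan is to reduce everything, via Lemma \ref{criterion}, to the single vanishing statement $h^1_{\cI_{o,Z},D}(1-\epsilon)=0$ for some sufficiently small rational $0<\epsilon<1-c$. Indeed $\cJ(D')=\cI_Z$ is given, and by Lemma \ref{ind1} the center $Z$ is irreducible, normal, and smooth at $o$; once the displayed vanishing is available, Lemma \ref{criterion} yields $\beta(D)<1$, hence $\beta(L)<\frac{1}{p+2}$ and property $(N_p)$. So the whole argument is about producing that one vanishing, and in both cases the strategy is identical: exhibit an M-regular rank-one sheaf $\cF$ on $A$, supported on $Z$ and locally free near $o$, whose numerical class is that of the restriction $(1-\epsilon)D|_Z$. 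By Remark \ref{ideal1} this makes $\cF\otimes\cI_{o,Z}$ a GV sheaf. Since $\cI_{o,Z}$ is supported on $Z$, computing $h^1_{\cI_{o,Z},D}(1-\epsilon)$ amounts to twisting $\cI_{o,Z}$ by a line bundle numerically equivalent to $(1-\epsilon)D|_Z$ and by a general $P\in\Pic^0(A)$; as $\cF$ lies in exactly this numerical class and the GV property is insensitive to $\Pic^0$-twists and to pullback by isogenies, fact (3) forces $h^1_{\cI_{o,Z},D}(1-\epsilon)=0$.

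The sheaf $\cF$ is built from adjunction. Applying the subadjunction formula (Theorem \ref{subadjunction}) to the minimal lc center $Z$ of $(A,D')$ and using $K_A\sim 0$, we obtain an effective $\mathbb Q$-divisor $D_Z$ with $(Z,D_Z)$ klt and $cD|_Z\equiv D'|_Z\sim_{\mathbb Q}K_Z+D_Z$. Therefore $(1-\epsilon)D|_Z\equiv K_Z+D_Z+N$, where $N:=(1-\epsilon-c)\,D|_Z$. The crucial point is that $N$ is \emph{ample}: this is precisely where the strict inequality $c<1$ (the fact that $Z$ is an lc, not a klt, center) is used, since it guarantees $1-\epsilon-c>0$ for small $\epsilon$. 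Thus $(1-\epsilon)D|_Z$ is numerically $\omega_Z$ twisted by the klt boundary $D_Z$ plus an ample class, and by generic vanishing for klt pairs the rank-one sheaf $\cF$—obtained by pushing forward, along a resolution $\rho\colon Z'\to Z$, the bundle $\omega_{Z'}$ twisted by the pullback of an integral divisor numerically equivalent to $N+D_Z$ (which exists thanks to the ample room in $N$)—is M-regular, and it is locally free of rank one near the smooth point $o$. The role of subadjunction is exactly to replace the restriction $L^{(1-\epsilon)/(p+2)}|_Z$, which is in general \emph{not} M-regular, by an adjoint sheaf in the same numerical class to which generic vanishing does apply.

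It remains to run this construction in the two cases. In case (1), $Z$ is a prime divisor, so adjunction gives $K_Z\equiv Z|_Z$; when the stabilizer of $Z$ is trivial, $Z$ is of general type and Lemma \ref{ideal-GV} applies directly, while in general the ample summand $N$ supplies the missing positivity. In case (2) the hypotheses are tailored so that $\omega_Z$ descends to a genuine line bundle on $B$: since $Z$ is Gorenstein with $H_B|_Z\sim K_Z$ and $D'|_B-H_B$ is nef, restricting to $Z$ gives $cD|_Z\equiv D'|_Z\equiv K_Z+(\text{nef})$, so again $(1-\epsilon)D|_Z\equiv K_Z+(\text{nef})+N$ with $N$ ample. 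Because $Z$ generates the abelian subvariety $B$ and $N$ is ample, the associated adjoint sheaf is M-regular on $B$; pushing forward along $B\hookrightarrow A$ preserves M-regularity, since the cohomological support loci pull back along the surjection $\Pic^0(A)\twoheadrightarrow\Pic^0(B)$ without dropping codimension. The standing assumption that Conjecture \ref{main-problem1} holds in dimension $\le g-1$, together with $(D^{\dim B}\cdot B)>(\dim B)^{\dim B}$, is available to control any reduction to the proper abelian subvariety $B\subsetneq A$.

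I expect the main obstacle to be the M-regularity of $\cF$ when $Z$ (in case (1)) or its image (in case (2)) is \emph{fibered by abelian subvarieties}: for such $Z$ the pushforward of $\omega_{Z'}$ is only GV, not M-regular, so the argument of Lemma \ref{ideal-GV} does not apply verbatim. The resolution is that the leftover ample class $N$—available only because $c<1$—provides strict positivity transverse to every abelian subvariety, upgrading the adjoint sheaf from GV to M-regular. Making this precise (choosing the integral divisor in the class $N+D_Z$, bounding the discrepancies of $\rho$ using that $(Z,D_Z)$ is klt, and verifying the codimension estimate $\codim V^i>i$ for $i\ge 1$) is the technical heart of the proof; everything else is the formal bookkeeping described above.
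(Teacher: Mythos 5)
Your top-level reduction (Lemma \ref{criterion}) and the appeal to Remark \ref{ideal1} match the paper, but the step that carries all the weight is invalid. You claim that once you have \emph{some} M-regular rank-one sheaf $\cF$ on $Z$ with $c_1(\cF)\equiv(1-\epsilon)D|_Z$, the GV-ness of $\cF\otimes\cI_{o,Z}$ plus fact (3) ``forces'' $h^1_{\cI_{o,Z},D}(1-\epsilon)=0$ because GV is insensitive to the numerical class. It is not. The rank function $h^1_{\cI_{o,Z},D}(1-\epsilon)$ is computed from the \emph{honest} line bundle $L^{\otimes M^2(1-\epsilon)/(p+2)}$ restricted to $Z^{(M)}=\pi_M^{-1}(Z)$; your $\pi_M^*\cF$ lies in the same numerical class, but two rank-one sheaves on $Z^{(M)}$ in the same numerical class differ by a numerically trivial line bundle on $Z^{(M)}$, which in general does \emph{not} come from $\Pic^0(A)$ (already for a curve of genus $>g$ inside $A$, $\Pic^0$ of the curve is strictly larger than the image of $\Pic^0(A)$). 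GV with respect to $\Pic^0(A)$ is invariant under $\Pic^0(A)$-twists, not under arbitrary numerically trivial twists, so fact (3) applied to $\cF\otimes\cI_{o,Z}$ says nothing about $\pi_M^*\cI_{o,Z}\otimes L^{\otimes M^2t}$. Your construction of $\cF$ is also flawed on its own terms: there is in general no \emph{integral} divisor numerically equivalent to $N+D_Z$, since that class is genuinely fractional (essentially $\frac{1-\epsilon}{p+2}L|_Z-K_Z$); ``ample room'' lets you move a divisor within its class, not deform the class to an integral one. Coping with this fractionality is precisely why the theory passes to $\pi_M^{-1}(Z)$, and why the paper establishes GV \emph{downstairs} (for a single point, where Remark \ref{ideal1} applies) and then pulls back by the isogeny, rather than re-running the continuous-global-generation argument at the $M^{2g}$ points of $\pi_M^{-1}(o)$.

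This gap is exactly what the case hypotheses (1) and (2) are designed to fill, and your uniform argument never uses them — a red flag, since if it worked it would prove the statement with no case hypotheses and render the factor $2$ in Theorem \ref{main} pointless. In case (1) the paper does not use subadjunction at all: it uses the honest adjunction identity $\cO_Z(Z)=\omega_Z$ together with the decomposition $M^2D'=M^2Z+(M^2D'-M^2Z)$, whose second summand is an effective divisor \emph{on} $A$, so that fact (2) (tensoring the GV sheaf $\pi_M^*(\omega_Z\otimes\cI_{o,Z})$ with the locally free GV bundle $\cO_A(M^2D'-M^2Z)$) yields exactly $\pi_M^*\cI_{o,Z}\otimes\cO_A(M^2D')$ up to a $\Pic^0(A)$-twist; and when $Z$ is not ample, the paper quotients by the abelian subvariety $K$ fibering $Z$ and uses $0\to\cI_{K_o,Z}\to\cI_{o,Z}\to\cI_{o,K_o}\to 0$ together with the standing assumption that Conjecture \ref{main-problem1} holds in dimension $\le g-1$ applied to the fiber — not, as you propose, an ``ample summand upgrading GV to M-regular,'' which you do not substantiate. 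In case (2) the honest relation $K_Z\sim H_B|_Z$ with $H_B$ an \emph{integral} divisor on $B$ and $D''-H_B$ ample on $B$ is what makes the comparison possible: the paper takes a general section $s\in H^0(B,\cO_B(M^2D''-M^2H_B))$ not vanishing at $o$, and plays off two exact sequences — one showing (via Nadel vanishing, using $\cJ(\pi_M^*D')=\cI_{Z^{(M)}}$) that $\cO_{Z^{(M)}}(M^2D'')$ is IT$^0$ and hence the cokernel $\cO_W(M^2D'')$ is M-regular, the other exhibiting $\pi_M^*(\cI_{o,Z})(M^2D'')$ as an extension of that M-regular sheaf by the GV sheaf $\mu_M^*(\cI_{o,Z}(H_B))$ — to conclude GV-ness of the sheaf actually appearing in the rank function. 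Your proposal correctly identifies the fibered case as the danger point, but the mechanism you offer to resolve it is not the one that works, and the bridge from adjoint-type sheaves to actual powers of $L$ is missing.
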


\subsection{The proof of Proposition \ref{tech} (1)}
We first assume that $Z$ is an ample divisor. After translation, we may assume that $o\in Z$ is a smooth point  and it suffices to show that $h^1_{\cI_{o, Z}, D}(1-\epsilon)=0$ for some $0<\epsilon<<1$ by Lemma
\ref{criterion}.

By adjunction, we know that $\cO_Z(Z)=\omega_Z$ is the dualizing sheaf on $Z$. From $$0\rightarrow \cO_A\rightarrow \cO_A(Z)\rightarrow \cO_Z(Z)\rightarrow 0,$$ we see easily that $\cO_Z(Z)$ is M-regular. Thus
$\cO_Z(Z)\otimes \cI_{o, Z}$ is GV by Remark \ref{ideal1}. Hence  $h^1_{\cO_Z(Z)\otimes \cI_{o, Z}, D}(\delta)=0$ for $0<\delta<<1$ by (\ref{facts}) (4).

We then take  $M>0$ sufficiently large and divisible such that
$M^2D'$ is an integral divisor. Since $D'=Z+Z'$  where $Z'$ is an effective $\mathbb Q$-divisor. We see that $M^2D'-M^2Z$ is an effective divisor on $A$ and hence $\cO_A(M^2D'-M^2Z)$ is GV. Let $\pi_: A\rightarrow A $ be
the multiplication-by-$M$ map.

By Subsection \ref{facts} (2), we know that $\pi_M^*(\cO_Z(Z)\otimes \cI_{o, Z})\otimes \cO_A(M^2D'-M^2Z)$ is GV. Hence $\pi_M^*(\cI_{o, Z})\otimes \cO_A(M^2D') $ is GV. Thus $h^1_{\cI_{o, Z}, D}(t)=0$ for $t>c$ by
Subsection \ref{facts} (3).

If $Z$ is not ample, we can again assume that $Z$ is fibred by abelian subvariety $K$ and we consider
\begin{eqnarray*}
\xymatrix{
Z\ar[d]^{h}  \ar@{^{(}->}[r] & A\ar[d] \\
Z_K \ar@{^{(}->}[r] & A/K.}
\end{eqnarray*}
We may assume that $o_K:=h(o)$ is a smooth point of $Z_K$ and let $K_o:=h^{-1}(o)$. Consider
$$0\rightarrow \cI_{K_o, Z}\rightarrow \cI_{o, Z}\rightarrow  \cI_{o, K_o}\rightarrow 0,$$
 hence as before, it suffices to show that $h^1_{\cI_{K_o, Z}, D}(1-\epsilon)=0$ and $h^1_{\cI_{o, K_o}, D}(1-\epsilon)=0$ for $0<\epsilon<<1$.

 The latter is  a consequence of the assumption that Conjecture \ref{main-problem1} holds in dimension $\leq g-1$.

 We note that $\cI_{K_o, Z}=h^*\cI_{o_K, Z_K}$ and $$\cO_Z(Z)\otimes \cI_{K_o, Z}=h^*(\omega_{Z_K}\otimes \cI_{o_K, Z_K})$$ is again GV. Then by exactly the same argument as the general type case, we conclude the proof.

\subsection{The proof of Proposition \ref{tech} (2)}
The proof is quite similar to the previous case.
After translation, we may assume that $o\in Z$ is a smooth point. By Theorem \ref{subadjunction}, $D'|_Z\sim_{\mathbb Q}K_Z+D_Z$ such that $(Z, D_Z)$ is klt. Moreover, by assumption, $Z$ is Gorenstein. Then $Z$ has
canonical singularities. For a resolution $\rho: Z'\rightarrow Z$ we have $\rho_*K_{Z'}=K_Z$. Thus by Lemma \ref{ideal-GV}, we know that $\cI_{o, Z}\otimes H_B$ is GV.

Take $D''=D'+\epsilon D$ such that $c<c'':=c+\epsilon<1$. Then $D''-H_B$ is an ample $\mathbb Q$-divisor on $B$.

Let $M$ be an integer sufficiently large and divisible so that $M^2D''$ is an integral divisor and let $\pi_M: A\rightarrow A$ be the multiplication-by-$M$ map. We have the commutative diagram
\begin{eqnarray*}
\xymatrix{
Z^{(M)}\ar[d]^{\mu_M}\ar@{^{(}->}[r] & B\ar[d]^{\pi_M}\ar@{^{(}->}[r] & A\ar[d]^{\pi_M}\\
Z\ar@{^{(}->}[r] & B\ar@{^{(}->}[r] & A,}
\end{eqnarray*}
where $Z^{(M)}:=\pi_M^{-1}(Z)$.

We take a general  $s\in H^0(B, \cO_B(M^2D''-M^2H_B))$ such that $o$ is not contained in the corresponding  divisor of $s$. Let $W$ be the zero locus of $s|_{Z^{(M)}}$. We have the following two exact sequences:
$$0\rightarrow \cO_{Z^{(M)}}(M^2H_B)\rightarrow  \cO_{Z^{(M)}}(M^2D'')\rightarrow \cO_{W}(M^2D'')\rightarrow 0$$
and
$$ 0\rightarrow\mu_M^{*}(\cI_{o, Z}(H_B))\rightarrow \pi_M^{*}(\cI_{o, Z})(M^2D'')\rightarrow\cO_{W}(M^2D'')\rightarrow 0.$$

In the first exact sequence,  we know that $M^2H_B|_{Z^{(M)}}=K_{Z^{(M)}}$, hence $ \cO_{Z^{(M)}}(M^2H_B)$ is GV. Moreover, since $\cJ(D')=\cI_Z$, we have $\cJ(\pi_M^*D')=\cI_{Z^{(M)}}$, we then see from Nadel vanishing
that $\cO_{Z^{(M)}}(M^2D'')$ is IT$^0$. Thus $\cO_{W}(M^2D'')$ is M-regular.

In the second exact sequence, we already know that $\mu_M^{*}(\cI_{o, Z}(H_B))$ is GV, thus $\pi_M^{*}(\cI_{o, Z})(M^2D'')$ is also GV and we then conclude that $\beta(L)<\frac{1}{p+2}$.
\section{The main result}

\begin{theo}\label{main}
Let $(A, L)$ be a polarized abelian variety of dimension $g$ and $p\geq -1$ be an integer. Let $D=\frac{1}{2(p+2)}L$ be a $\mathbb Q$-Cartier divisor. Assume that $$(D^{\dim B}\cdot B)>(\dim B)^{\dim B},$$ then $L$
satisfies property $(N_p)$.
\end{theo}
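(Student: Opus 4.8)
The plan is to deduce Theorem~\ref{main} from the inequality $\beta(L)<\frac{1}{p+2}$, which gives property $(N_p)$ by Caucci's criterion (\cite[Theorem 1.1]{C}). Writing $D=\frac{1}{p+2}L$, the hypothesis becomes $(D^{\dim B}\cdot B)>(2\dim B)^{\dim B}$ for every positive-dimensional abelian subvariety $B\subseteq A$, and it suffices to prove $\beta(D)<1$, which is equivalent to $\beta(L)<\frac{1}{p+2}$ by the defining scaling $h^i_{\cF,D}(t)=h^i_{\cF,L}(t/(p+2))$. I would argue by induction on $g=\dim A$, the inductive hypothesis being Theorem~\ref{main} itself in every dimension $<g$. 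Since $(D^g)>(2g)^g>g^g$, I may choose an effective $\mathbb{Q}$-divisor numerically equivalent to $D$ with $m_o>g$, and by Lemma~\ref{ind1} obtain $D_1'\equiv c_1D$ with $0<c_1<1$ whose multiplier ideal is $\cI_Z$ for a normal minimal lc center $Z\ni o$, smooth at $o$. By Lemma~\ref{criterion} everything then reduces to the vanishing $h^1_{\cI_{o,Z},D}(1-\epsilon)=0$ for some $0<\epsilon\ll1$.

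Next I would split according to the connected stabilizer $K=\mathrm{Stab}^0(Z)\subseteq A$. If $K=0$, then $Z$ is of general type (Ueno), and since a minimal lc center has rational singularities, $\rho_*\omega_{Z'}=\omega_Z$ for a resolution $\rho\colon Z'\to Z$. Subadjunction (Theorem~\ref{subadjunction}) gives $D_1'|_Z\sim_{\mathbb{Q}}K_Z+D_Z$ with $(Z,D_Z)$ klt, so the adjoint sheaf $\cO_Z(K_Z+D_Z)\otimes\cI_{o,Z}$ is GV by the klt refinement of Lemma~\ref{ideal-GV}. Crucially $K_Z+D_Z\sim_{\mathbb{Q}}D_1'|_Z$ is the restriction of a \emph{global} class, so setting $D''=D_1'+\epsilon D\equiv c''D$ with $c''<1$ and twisting by the ample (hence $\mathrm{IT}^0$) line bundle $\cO_A(M^2D''-M^2D_1')\equiv M^2\epsilon D$, I conclude by GV$\otimes\mathrm{IT}^0$ stability (\cite[Theorem 3.2]{PP3}) that $\pi_M^*\cI_{o,Z}\otimes\cO_A(M^2D'')$ is GV. The vanishing then follows from the vanishing of cohomological rank functions of GV sheaves at positive $t$ (\cite[Theorem 5.2]{JP}). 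The case where $Z$ is a divisor is exactly Proposition~\ref{tech}(1).

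If $K\neq0$, then $Z=Z+K$ fibers as $h\colon Z\to\bar Z\subseteq A/K$ with $\bar Z$ of general type, and I would use the sequence $0\to\cI_{K_o,Z}\to\cI_{o,Z}\to\cI_{o,K_o}\to0$. The base term $\cI_{K_o,Z}=h^*\cI_{o_K,\bar Z}$ is handled exactly as in the general-type case, using that $\omega_{\bar Z}\otimes\cI_{o_K,\bar Z}$ is GV on $A/K$ and that pullback along the quotient $A\to A/K$ preserves the GV property. The fiber term $\cI_{o,K_o}$, with $K_o$ a translate of $K$, reduces to showing $\beta(D|_K)<1$; this is precisely Theorem~\ref{main} on the abelian variety $K$ of dimension $<g$, and the inductive hypothesis applies because restricting $(D^{\dim B}\cdot B)>(2\dim B)^{\dim B}$ to subvarieties $B\subseteq K$ yields an inequality of \emph{the same shape}. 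Assembling the two vanishings via the long exact sequence closes the induction.

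The main obstacle is the intermediate-dimensional, geometrically nondegenerate minimal lc center: for such $Z$ one cannot estimate $(D^{\dim Z}\cdot Z)$ and so cannot feed Helmke's induction (Proposition~\ref{ind2})---this is precisely the open Question~\ref{intersectionquestion}. The resolution is to never attempt to shrink such a center: being geometrically nondegenerate it is of general type, and the GV-sheaf argument of the second paragraph finishes directly, bypassing every intersection bound. The factor $2$ is the price of making the induction self-consistent: it is exactly the normalization preserved under restriction to the fiber subvariety $K$, so that the fibered case closes on itself, whereas the optimal factor $1$ of Conjecture~\ref{main-problem1} does not survive restriction-and-reinduction. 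The technical points I expect to demand care are the identification $\rho_*\omega_{Z'}=\omega_Z$ together with the klt-adjoint strengthening of Lemma~\ref{ideal-GV}, and the compatibility of cohomological rank functions with the fibration $A\to A/K$ needed to split the sequence.
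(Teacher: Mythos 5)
Your overall skeleton---reduce via Lemma \ref{ind1} and Lemma \ref{criterion} to a vanishing $h^1_{\cI_{o,Z},\,\cdot}(1-\epsilon)=0$, split according to whether $Z$ is fibred by an abelian subvariety, and handle the fibre direction by induction---is indeed the paper's. But your key step (the case $K=0$) has a genuine gap, and it is exactly the point that the paper's doubling trick exists to overcome. The class $K_Z+D_Z\sim_{\mathbb Q}D_1'|_Z$ is only a $\mathbb Q$-class, so any honest sheaf must live on $Z^{(M)}=\pi_M^{-1}(Z)$, and there two things go wrong simultaneously. If you establish GV-ness downstairs (which is all any ``klt refinement'' of Lemma \ref{ideal-GV} could give) and pull back, the sheaf you actually control is $\pi_M^*\bigl(\cI_{o,Z}\otimes\cO_Z(M^2D_1'|_Z)\bigr)$, whose twist has class $\pi_M^*(M^2c_1D)\equiv M^4c_1D$, not $M^2c_1D$; the compensating bundle needed to reach $\cO_A(M^2D'')$ then has class $M^2(c''-M^2c_1)D$, which is anti-ample for $M$ large, so the step ``tensor with an ample IT$^0$ bundle and apply \cite[Theorem 3.2]{PP3}'' is unavailable. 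If instead you work directly upstairs, then $M^2D_1'|_{Z^{(M)}}\equiv\pi_Z^*(K_Z+D_Z)=K_{Z^{(M)}}+\pi_Z^*D_Z$ is numerically a \emph{pure} klt adjoint class (the ample excess $(M^2-1)(K_Z+D_Z)$ is absorbed by the scaling), so Kawamata--Viehweg vanishing gives you at best M-regularity; and you must then tensor with $\cI_{\pi_M^{-1}(o),Z^{(M)}}$, the ideal of $M^{2g}$ points, which Remark \ref{ideal1} does not cover: continuous global generation yields generic surjectivity of evaluation at \emph{one} point, and simultaneous evaluation at many points genuinely fails (a degree-one line bundle on an elliptic curve tensored with the ideal of two points is not GV). The clearest symptom that the argument cannot be repaired as stated is that it nowhere uses general type in an essential way, so it would apply verbatim to $Z=B$ an abelian subvariety (where subadjunction gives $D_1'|_B\sim_{\mathbb Q}D_B$ with $(B,D_B)$ klt) and would ``prove'' $\beta(L)<\frac{1}{p+2}$ with no hypothesis on $(L^{\dim B}\cdot B)$ and no induction---false already for split abelian varieties with one factor of small degree.

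The paper's proof is built to circumvent precisely this incompatibility with $\pi_M$, and this is also where the factor $2$ really comes from (not from ``induction self-consistency'': the hypothesis restricts to abelian subvarieties equally well with or without the $2$). Because $c_1<1$ relative to $D=\frac{1}{2(p+2)}L$, the doubled divisor $D_2=2D_1+\epsilon L\equiv c_2L$ still satisfies $c_2<\frac{1}{p+2}$, and $2D_1|_Z\sim_{\mathbb Q}K_Z+(K_Z+2D_Z)$ lets one split off exactly one copy of the canonical class: a section $s$ of $\mu_M^*(L^{\otimes M^2c_2}|_{Z^{(M)}})(2\tilde E_2-K_{\tilde Z^{(M)}})$ exists, and the resulting exact sequence, pushed down by $\mu_M$, has kernel $\mu_{M*}\cO_{\tilde Z^{(M)}}(K_{\tilde Z^{(M)}})\otimes\cI_{\pi_Z^{-1}(o)}=\pi_Z^*\bigl(\mu_*(\cO_{\tilde Z}(K_{\tilde Z})\otimes\cI_o)\bigr)$---an honest \'etale pullback, because the canonical class is the one class invariant under $\pi_Z$---which is GV by Lemma \ref{ideal-GV} (this is where general type enters), while the cokernel $\mu_{M*}\cO_{\tilde D}(K_{\tilde D})$ is M-regular; sandwiched between them, $L^{\otimes M^2c_2}|_{Z^{(M)}}\otimes\cI_{\pi_Z^{-1}(o)}$ is GV, which gives the needed vanishing. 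Your fibred case inherits the same gap (it is ``handled exactly as in the general-type case''), plus an issue you flag but do not resolve: there is no natural polarization on $A/K$, and the paper needs Poincar\'e reducibility, the splitting $\mu^*L\simeq L_{K'}\boxtimes L_K$, the K\"unneth formula, and restriction of multiplier ideals to general fibres (\cite[Theorem 9.5.35]{Lar2}) before ``the same argument'' can be run on the quotient side.
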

\begin{proof}Since $(D^g)>g^g$, by Lemma \ref{ind1}, we may take $D_1\equiv c_1D$ for some $0<c_1<1$ such that an irreducible normal variety $Z$ is the unique lc center of $(A, D_1)$ containing $o$ as a smooth point . Hence
$\cJ(A, D_1)=\cI_{Z}$. By Lemma \ref{criterion}, it suffices to show that $h^1_{\cI_{o, Z}, 2D}(1-\eta)=0$ for $0<\eta<<1$.

\subsection{When $Z$ is not fibred by abelian subvarieties}

We first  treat the case $Z$ is not fibred by abelian subvarieties, or in other words, any desingularization of $Z$ is of general type.

By Theorem \ref{subadjunction}, we know that there exists an effective $\mathbb Q$-divisor $D_{Z}$ on $Z$ such that $(Z, D_{Z})$ is a klt pair and $$D_1|_{Z}\sim_{\mathbb Q}K_{Z}+D_{Z}.$$
We then take a log resolution $\mu:\tilde{Z}\rightarrow Z$ and write $$K_{\tilde Z}+\tilde{D}_Z+E_1=\mu^*(K_Z+D_Z)+E_1,$$ where $\tilde{D_Z}$ is the strict transform of $D_Z$, $E_1$, $E_2$ are exceptional divisors, and
$\tilde{D}_Z+E_1+E_2$ has SNC support.

Note that $2\mu^*(D_1)+2E_1\sim_{\mathbb Q}2K_{\tilde Z}+2\tilde{D}_Z+2E_2$. Thus $2\mu^*(D_1)+2E_1-K_{\tilde{Z}}\sim_{\mathbb Q} K_{\tilde Z}+2\tilde{D}_Z+2E_2$. We then take $D_2=2D_1+\epsilon L\sim_{\mathbb
Q}(\frac{c_1}{p+2}+\epsilon)L$ for some $0<\epsilon<<1$ such that $\frac{c_1}{p+2}+\epsilon<\frac{1}{p+2}$. We denote $c_2=\frac{c_1}{p+2}+\epsilon$.
Then \begin{eqnarray}\label{compare-divisor}\mu^*(D_2)+E_2-K_{\tilde Z}\sim_{\mathbb Q} K_{\tilde Z}+D',\end{eqnarray} where $D'$ is a big $\mathbb Q$-divisor on $\tilde{Z}$.

We then take $M>0$ an integer sufficiently large and divisible such that $M^2\epsilon$ and $M^2\frac{c_1}{p+2}$ are integers so that $M^2D_2$ is $\mathbb Q$-equivalent to an integral divisor $L^{\otimes M^2c_2}$. Let
$\pi_M: A\rightarrow A$ be the multiplication-by-$M$ map and we consider the Cartesian
\begin{eqnarray*}
\xymatrix{
\tilde{Z}^{(M)}\ar[d]^{\tilde{\pi}_Z}\ar[r]^{\mu_M} & Z^{(M)} \ar[d]^{\pi_Z}\ar@{^{(}->}[r] &A\ar[d]^{\pi_M}\\
\tilde{Z}\ar[r]^{\mu} & Z\ar@{^{(}->}[r] & A.
}
\end{eqnarray*}
We will also denote by  $\tilde{E}_i=\tilde{\pi}_Z^*E_i$ on $\tilde{Z}^{(M)}$ for $i=1,2$. They are also $\mu_M$-exceptional divisors.

We  claim that $$H^0(\tilde{Z}^{(M)}, \mu_M^*(L^{\otimes M^2c_2}|_{Z^{(M)}})(2\tilde{E_2}-K_{\tilde{Z}^{(M)}}) )\neq 0.$$ Indeed, by (\ref{compare-divisor}), $\mu_M^*(L^{\otimes
M^2c_2}|_{Z^{(M)}})(2\tilde{E_2}-K_{\tilde{Z}^{(M)}})\sim_{\mathbb Q} K_{\tilde{Z}^{(M)}}+\tilde{\pi}_Z^*D'$ and $\tilde{\pi}_Z^*D'$ is a big divisor on $\tilde{Z}^{(M)}$. By \cite[Theorem 11.2.12]{Lar2}, for any nef
divisor $H$ on $A$, $$H^i(\tilde{Z}^{(M)}, \mu_M^*(L^{\otimes M^2c_2}|_{Z^{(M)}})(2\tilde{E_2}-K_{\tilde{Z}^{(M)}}) \otimes \cJ(||\tilde{\pi}_Z^*D'||)\otimes \mu_M^*H)=0$$ for $i>0.$ From this, we deduce that
$$L^{\otimes M^2c_2}|_{Z^{(M)}}\otimes \mu_{M*}\big(\cO_{\tilde{Z}^{(M)}}(2\tilde{E_2}-K_{\tilde{Z}^{(M)}})\otimes \cJ(||\tilde{\pi}_Z^*D'||)\big)$$ is a non-zero IT$^0$ sheaf on $A$ supported on $Z^{(M)}$. Hence it has
global sections and so does $ \mu_M^*(L^{\otimes M^2c_2}|_{Z_M})(2\tilde{E_2}-K_{\tilde{Z}^{(M)}}) $.

We then take a global section $0\neq s\in H^0(\tilde{Z}^{(M)}, \mu_M^*(L^{\otimes M^2c_2}|_{Z^{(M)}})(2\tilde{E_2}-K_{\tilde{Z}^{(M)}}) )$ and let $\tilde{D}$ be the corresponding divisor on $\tilde{Z}^{(M)}$.
We then have
\begin{eqnarray*}
0\rightarrow \cO_{\tilde{Z}^{(M)}}(K_{\tilde{Z}^{(M)}})\xrightarrow{\cdot s} \mu_M^*(L^{\otimes M^2c_2}|_{Z^{(M)}})(2\tilde{E_2})\rightarrow \cO_{\tilde{D}}(K_{\tilde{D}})\rightarrow 0,
\end{eqnarray*}
where $K_{\tilde{D}}=(K_{\tilde{Z}^{(M)}}+\tilde{D})|_{\tilde{D}}$ is the canonical bundle of $\tilde{D}$.  Note that $\mu_{M*}\big(\mu_M^*(L^{\otimes M^2c_2}|_{Z^{(M)}})(2\tilde{E_2})\big)=\mu_M^*(L^{\otimes
M^2c_2}|_{Z^{(M)}})$ since $\tilde{E_2}$ on $\tilde{Z}^{(M)}$ is $\mu_M$-exceptional. Since we have $R^1\mu_{M*} \cO_{\tilde{Z}^{(M)}}(K_{\tilde{Z}^{(M)}})=0$ by Grauert-Riemenschneider, we can apply $\mu_{M*}$ to the above
short exact sequence to get
$$ 0\rightarrow\mu_{M*} \cO_{\tilde{Z}^{(M)}}(K_{\tilde{Z}^{(M)}})\rightarrow L^{\otimes M^2c_2}|_{Z^{(M)}}\rightarrow \mu_{M*}\cO_{\tilde{D}}(K_{\tilde{D}})\rightarrow 0.$$

We claim that $\mu_{M*}\cO_{\tilde{D}}(K_{\tilde{D}})$ is M-regular on $A.$ We already know that $\mu_{M*} \cO_{\tilde{Z}^{(M)}}(K_{\tilde{Z}^{(M)}})$ is GV. On the other hand, $\cI_Z=\cJ(D_1)$. Since $\pi_M$ is \'etale, we
have
$\cI_{Z^{(M)}}=\cJ(\pi_M^*D_1)$. Note that $\pi_M^*D_1\equiv M^2D_1$ and $L^{\otimes M^2c_2}-M^2D_1\equiv (M^2c_2-M^2c_1)$ is an $\mathbb Q$-ample divisor. Thus by Nadel vanishing (see \cite[Theorem 9.4.8]{Lar2}),
 $L^{\otimes M^2c_2}\otimes \cI_{Z^{(M)}}$ is IT$^0$, thus so is $L^{\otimes M^2c_2}|_{Z^{(M)}}$. Thus, $V^i( \mu_{M*}\cO_{\tilde{D}}(K_{\tilde{D}}))\subset V^{i+1}(\mu_{M*} \cO_{\tilde{Z}^{(M)}}(K_{\tilde{Z}^{(M)}}))$ for
 all $i>0$. Hence $\mu_{M*}\cO_{\tilde{D}}(K_{\tilde{D}})$ is M-regular.

 Since we are free to replace $D_1$ (resp. $Z$) by $D_1-p$ (resp. $Z-p$) for some $p\in Z$, we may assume that $\mu$ is an isomorphism around $o$ and we thus still denote by $o$ its preimage in $\tilde{Z}.$ We can also
 assume that $\tilde{\pi}_Z^{-1}(o)\cap \tilde{D}=\emptyset$. We then have
\begin{eqnarray*}0\rightarrow \cO_{\tilde{Z}^{(M)}}(K_{\tilde{Z}^{(M)}})\otimes\cI_{\tilde{\pi}_Z^{-1}(o)}\xrightarrow{\cdot s} \mu_M^*(L^{\otimes M^2c_2}|_{Z^{(M)}})(2\tilde{E_2})\otimes\cI_{\tilde{\pi}_Z^{-1}(o)}\\
\rightarrow \cO_{\tilde{D}}(K_{\tilde{D}})\rightarrow 0.\end{eqnarray*} We again pushforward this short exact sequence to $Z^{(M)}$ and get
\begin{eqnarray}\label{final-sequence}0\rightarrow \mu_{M*}( \cO_{\tilde{Z}^{(M)}}(K_{\tilde{Z}^{(M)}}))\otimes\cI_{\pi_Z^{-1}(o)}\rightarrow L^{\otimes M^2c_2}|_{Z^{(M)}}\otimes\cI_{\pi_Z^{-1}(o)}\nonumber \\
\rightarrow \mu_{M*} \cO_{\tilde{D}}(K_{\tilde{D}})\rightarrow 0.\end{eqnarray}

 Note that $\mu_{M*}( \cO_{\tilde{Z}^{(M)}}(K_{\tilde{Z}^{(M)}}))\otimes\cI_{\pi_Z^{-1}(o)}=\mu_{M*}\big(\tilde{\pi}_Z^*(\cO_{\tilde{Z}}(K_{\tilde{Z}})\otimes \cI_o)\big)$, by \'etale base change, $$\mu_{M*}(
 \cO_{\tilde{Z}^{(M)}}(K_{\tilde{Z}^{(M)}}))\otimes\cI_{\pi_Z^{-1}(o)}=\pi_Z^*(\mu_*(\cO_{\tilde{Z}}(K_{\tilde{Z}})\otimes \cI_o).$$
 By Lemma \ref{ideal-GV}, $\mu_*(\cO_{\tilde{Z}}(K_{\tilde{Z}})\otimes \cI_o$ is a GV sheaf on $A$. Hence $$\mu_{M*}( \cO_{\tilde{Z}^{(M)}}(K_{\tilde{Z}^{(M)}}))\otimes\cI_{\pi_Z^{-1}(o)}$$ is also a GV sheaf on $A.$
 From the short exact sequence (\ref{final-sequence}), we see that $L^{\otimes M^2c_2}|_{Z^{(M)}}\otimes\cI_{\pi_Z^{-1}(o)}$ is again GV. Hence by Subsection \ref{facts} (4), we know that
 $$h^1_{L^{\otimes M^2c_2}|_{Z^{(M)}}\otimes\cI_{\pi_Z^{-1}(o)}, L}(t)=0$$ for $t>0$. Note that $\cI_{\pi_Z^{-1}(o)}=\pi_Z^{-1}\cI_{o, Z}$, thus we have $$h^1_{\cI_{o, Z}, L}(t)=0$$ for $t>c_2$. Because
 $c_2=2c_1+\epsilon<\frac{1}{p+2}$. We finish the proof by Lemma \ref{criterion} when $\tilde{Z}$ is of general type,

\subsection{The general case}
We shall apply induction on dimensions. We now assume that $\beta(L|_B)<\frac{1}{p+2}$ for any proper abelian subvariety $B$ of $A$.

We assume that $Z$ is fibred by an abelian subvariety $K$ of  $A$ and $\overline{Z}\hookrightarrow A/K$ is not fibred by any abelian subvarieties of $A/K$.  We have
\begin{eqnarray*}\xymatrix{
Z\ar[d]^{p_Z}\ar@{^{(}->}[r] &A\ar[d]^p\\
\overline{Z}\ar@{^{(}->}[r] & A/K}
\end{eqnarray*}
 Note that $p_Z$ is smooth, $\overline{o}=p(o)$ is a smooth point of $\overline{Z}$. Then $K $ is exactly the fiber of $p_Z$ over $\overline{o}$. Then it suffices to show that
 \begin{eqnarray}\label{reduction}h^1_{p_Z^{*}(\cI_{\overline{o}, \overline{Z}}), 2D}(1-\epsilon)=0
 \end{eqnarray} for some $0<\epsilon<<1$, because we have $$0\rightarrow p_Z^{*}(\cI_{\overline{o}, \overline{Z}})\rightarrow \cI_{o, Z}\rightarrow \cI_{o, K}\rightarrow 0,$$ hence combining (\ref{reduction}) with the
 assumption that $\beta(L|_K)<\frac{1}{p+2}$, we have $h^1_{\cI_{o, Z}, 2D}(1-\epsilon)=0$, which implies Theorem \ref{main}.

 We do not have a natural line bundle on $A/K$. Hence we need to apply Poincar\'e's reducibility theorem (\cite[Theorem 5.3.5]{BL}), there exists an abelian subvariety $K'$, which is complementary to $K$, i.e. $K'+K=A$ and
 $K'\cap K=G$ is an abelian finite group, such that for the natural addition map $\mu: K'\times K\rightarrow A$ we have $$\mu^*L\simeq (L|_{K'})\boxtimes (L|_{K}).$$ We will write $L_{K'}:=L|_{K'}$ and $L_K:=L|_K$. We also
 note that $\deg \mu=|G|$.

 We also note $\mu^{-1}(Z)=Z'\times K$, where $Z'\hookrightarrow K'$ is indeed isomorphic to the base change $\overline{Z}\times_{A/K}K'$.  Since $\cJ(D_1)=\cI_Z$, we have $\cJ(\mu^*D_1)=\cI_{\mu^{-1}(Z)}$. By \cite[Theorem
 9.5.35]{Lar2}, we know that for $x\in K$ general, $\cJ(\mu^*D_1|_{K'\times\{x\}})=\cI_{\mu^{-1}(Z)}\cdot \cO_{K'\times\{x\}}$. Hence there exists $D_1'\equiv \frac{c_1}{2(p+2)}L_{K'}$ such that $\cJ(D_1')=\cI_{Z'}$ and
 $Z'$ is indeed the unique minimal lc center of $(K', D_1')$.

 We now put everything in one commutative diagram:
\begin{eqnarray}
\xymatrix{
K'\times K \ar[r]^{\mu} & A\\
Z'\times K\ar@{^{(}->}[u] \ar[d]\ar[r]^{\mu_Z} & Z\ar@{^{(}->}[u]\ar[d]^{p_Z}\\
Z' \ar@{_{(}->}[d]\ar[r]^{\tau_Z} & \overline{Z}\ar@{_{(}->}[d]\\
K'\ar[r]^{\tau} & A/K.
}
\end{eqnarray}

Note that $\mu^{*}(p_Z^{*}(\cI_{\overline{o}, \overline{Z}}))=\cI_{\tau_Z^{-1}(\overline{o}), Z'}\boxtimes \cO_K$. We have

$$h^1_{p_Z^{*}(\cI_{\overline{o}, \overline{Z}}), 2D}(t)=\frac{1}{|G|}h^1_{\cI_{\tau_Z^{-1}(\overline{o}), Z'}\boxtimes \cO_K, 2\mu^*D}(t).$$
Hence by K\"unneth formula, in order to prove (\ref{reduction}), we just need to show that $$h^1_{\cI_{\tau_Z^{-1}(\overline{o}), Z'}, \frac{1}{p+2}L_{K'}}(1-\epsilon)=0$$ for $0<\epsilon <<1$.

We now apply exactly the same argument as in the previous case.  We will not go through again the whole argument but rather point out several crucial points. Let $\rho: \tilde{Z}\rightarrow \overline{Z}$ be a
desingularization and as before, we may assume that $\rho$ is an isomorphism over an open neighborhood of $\overline{o}$.
We then consider the Cartesian
\begin{eqnarray*}
\xymatrix{
\tilde{Z}'\ar[d]^{\rho'}\ar[d]\ar[r]^{\tilde{\tau}_Z} & \tilde{Z}\ar[d]^{\rho}\\
Z'\ar[r]^{\tau_Z}& \overline{Z}.}
\end{eqnarray*}

Then since $\overline{Z}$ is of general type, $\rho_*\cO_{\tilde{Z}}(K_{\tilde{Z}})\otimes \cI_{\overline{o}, \overline{Z}}$ is GV by Lemma \ref{ideal-GV}. Hence $$\rho'_*\cO_{\tilde{Z}'}(K_{\tilde{Z}'})\otimes
\cI_{\tau_Z^{-1}(\overline{o}), Z'}=\tau_Z^*(\rho_*\cO_{\tilde{Z}}(K_{\tilde{Z}})\otimes \cI_{\overline{o}, \overline{Z}})$$ is  also GV.

Secondly, since $Z'$ is the minimal lc center of $(K', D_1')$, $D_1'\equiv \frac{c_1}{2(p+2)}L_{K'}$, and $0<c_1<1$, we have $\frac{1}{p+2}L_{K'}|_{Z'}-2\rho'_*(K_{\tilde{Z}'})$ is an effective big $\mathbb Q$-divisor on
$Z'$.

We can then proceed as the previous case.
\end{proof}


\begin{thebibliography}{25}
% \bibitem[BNP]{BNP} Barja, Miguel \'Angel; Naranjo, Juan Carlos.; Pirola, Gian Pietro. On the topological index of irregular surfaces. J. Algebraic Geom. 16 (2007), no. 3, 435--458.
 % \bibitem[BPS]{BPS} Barja, Miguel \'Angel; Pardini, Rita; Stoppino, Lidia. Linear systems on irregular varieties, Journal of the Institute of Mathematics of Jussieu, 1--39. doi:10.1017/S1474748019000069.
  \bibitem[BL]{BL} Birkenhake, Christina; Lange, Herbert. Complex abelian varieties. Second edition. Grundlehren der Mathematischen Wissenschaften [Fundamental Principles of Mathematical Sciences], 302. Springer-Verlag,
      Berlin, 2004.
 \bibitem[C]{C} Caucci, Federico. The basepoint-freeness threshold and syzygies of abelian varieties,   Algebra Number theory 14 (2020), no 4. 947--960.

 \bibitem[D1]{Deb} Debarre, Olivier. Degree of curves in abelian varieties, Bull. Soc. math. Fr. 122 (1994), 101--119.

\bibitem[D2]{D2} Debarre, Olivier. In\'egalit\'es num\'eriques pour les surfaces de type g\'en\'eral. (French) [Numerical inequalities for surfaces of general type] With an appendix by A. Beauville. Bull. Soc. Math. France
    110 (1982), no. 3, 319--346.

 \bibitem[DHS]{DHS}Debarre, Olivier;   Hulek, Klaus; Spandaw, Jeroen.  Very Ample Linear Systems on Abelian Varieties, Math. Ann. 300 (1994), 181--202.
 \bibitem[GL]{GL} Green, Mark; Lazarsfeld, Robert. Deformation theory, generic vanishing theorems, and some conjectures of Enriques, Catanese and Beauville. Invent. Math. 90 (1987), no. 2, 389--407.
 \bibitem[FG]{FG} Fujino, Osamu; Yoshinori, Gongyo. On canonical bundle formulas and subadjunctions. Michigan Math. J. 61 (2012), no. 2, 255--264.
 \bibitem[Hel]{Hel} Helmke, Stefan. On Fujita's conjecture. Duke Mathematical Journal. Vol 88, No.2, 201--216.
 \bibitem[Ito1]{Ito1} Ito, Atsushi. A remark on higher syzygies of abelian surfaces, Comm.Algebra 46 (2018), no.12, 5342--5347.
 \bibitem[Ito2]{Ito} Ito, Atsushi. Basepoint-freeness thresholds and higher syzygies on abelian threefolds, arXiv: 2008.10272v1.
 \bibitem[Iy]{Iy} Iyer, Jaya N. Projective normality of abelian varieties. Trans. Amer. Math. Soc. 355 (2003), no. 8, 3209--3216.
 %\bibitem[JLT]{JLT}  Jiang, Zhi; Lahoz, Mart\'i; Tirabassi, Sofia. Characterization of products of theta divisors. Compos. Math. 150 (2014), no. 8, 1384--1412.
 \bibitem[JP]{JP}  Jiang, Zhi;  Pareschi, Giuseppe. Cohomological rank functions on abelian varieties, Annales de L'Ecole Normale Superieure 53 (2020) 815--846.
 \bibitem[K1]{Kaw} Kawamata, Yujiro. On Fujita's freeness conjecture for 3-folds and 4-folds. Math. Ann. 308 (1997), no. 3, 491--505.
 \bibitem[K2]{Kaw2} Kawamata, Yujiro. Subadjunction of log canonical divisors for a subvariety of codimension 2. Birational algebraic geometry (Baltimore, MD, 1996), 79--88, Contemp. Math., 207, Amer. Math. Soc.,
     Providence, RI, 1997.
 \bibitem[K3]{Kaw3} Kawamata, Yujiro. Subadjunction of log canonical divisors. II. Amer. J. Math. 120 (1998), no. 5, 893--899.
 \bibitem[Kol1]{Kol1} Koll\'ar, J\'anos. Singularities of pairs. Algebraic geometry, Santa Cruz 1995, 221--287, Proc. Sympos. Pure Math., 62, Part 1, Amer. Math. Soc., Providence, RI, 1997.
 \bibitem[Laz1]{Lar1} Lazarsfeld, Robert. Positivity in algebraic geometry. I. Classical setting: line bundles and linear series. Ergebnisse der Mathematik und ihrer Grenzgebiete. 3. Folge. A Series of Modern Surveys in
     Mathematics [Results in Mathematics and Related Areas. 3rd Series. A Series of Modern Surveys in Mathematics], 48. Springer-Verlag, Berlin, 2004.
 \bibitem[Laz2]{Lar2} Lazarsfeld, Robert. Positivity in algebraic geometry. II. Positivity for vector bundles, and multiplier ideals. Ergebnisse der Mathematik und ihrer Grenzgebiete. 3. Folge. A Series of Modern Surveys in
     Mathematics [Results in Mathematics and Related Areas. 3rd Series. A Series of Modern Surveys in Mathematics], 49. Springer-Verlag, Berlin, 2004.
 \bibitem[LPP]{LPP}  Lazarsfeld, Robert; Pareschi, Giuseppe; Popa, Mihnea. Local positivity, multiplier ideals, and syzygies of abelian varieties, Algebra and Number Theory 5 (2011) 185--196.
 \bibitem[Loz]{Loz} Lozovanu, Victor. Singular divisors and syzygies of polarized abelian threefolds, arXiv:1803.08780v2.
 \bibitem[PP1]{PP} Pareschi, Giuseppe; Popa, Mihnea. Regularity on abelian varieties. I. J. Amer. Math. Soc. 16 (2003), no. 2, 285--302.
 \bibitem[PP2]{PP1}  Pareschi, Giuseppe; Popa, Mihnea. Strong generic vanishing and a higher-dimensional Castelnuovo-de Franchis inequality. Duke Math. J. 150 (2009), no. 2, 269--285.
 \bibitem[PP3]{PP3} Pareschi, Giuseppe; Popa, Mihnea. Regularity on abelian varieties, III: relationship with Generic Vanishing and applications , in "Grasmannians, moduli spaces and vector bundles" (D.A. Ellwood, E.
     Previato eds), Clay Mathematical Institute Proceedings, AMS (2011) 141--168.
% \bibitem[Sch]{Sch}  Schreieder, Stefan; Decomposable theta divisors and generic vanishing. Int. Math. Res. Not. IMRN 2017, no. 16, 4984--5009.
 %\bibitem[V]{V}  Viehweg, Eckart. Weak positivity and the additivity of the Kodaira dimension for certain fibre spaces. Algebraic varieties and analytic varieties (Tokyo, 1981), 329--353, Adv. Stud. Pure Math., 1,
 North-Holland, Amsterdam, 1983.
 \end{thebibliography}
 \end{document}